\newcommand{\lev}{{\rm Lev}}
\newcommand{\tail}{{\rm Tail}}
\newcommand{\TP}{{\rm TP }}
\newcommand{\ITP}{{\rm ITP }}
\newcommand{\dom}{{\rm dom}}
\newcommand{\ZFC}{{\rm ZFC }}
\newcommand{\Add}{{\rm Add}}
\newcommand{\force}{\Vdash}
\newcommand{\spazio}{\textrm{ }}
\newcommand{\restr}{\upharpoonright}
\newtheorem{theorem}{Theorem}[section]
\newtheorem{lemma}[theorem]{Lemma}
\newtheorem{proposition}[theorem]{Proposition}
\newtheorem{coroll}[theorem]{Corollary}
\newtheorem{remark}[theorem]{Remark}
\newtheorem{definition}[theorem]{Definition}
\newtheorem{notation}[theorem]{Notation}
\newtheorem{claim}[theorem]{Claim}
\begin{document}
\title{Strong Tree Properties For Small Cardinals}


\author[Laura Fontanella ]{Laura Fontanella}
\address{Equipe de Logique Math\'ematique,
Universit\'e Paris Diderot Paris 7, UFR de math\'ematiques case 7012, 
site Chevaleret, 75205 Paris Cedex 13, France}
\email{fontanella@logique.jussieu.fr}


\subjclass[2010]{03E55 }

\keywords{tree property, large cardinals, forcing.}

\date{9 february 2012}



\maketitle

\begin{abstract}{}
An inaccessible cardinal $\kappa$ is supercompact when $(\kappa, \lambda)$-ITP holds for all $\lambda\geq \kappa.$ We prove that if there is a model of $\ZFC$ with infinitely many supercompact cardinals, then there is a model of \ZFC where for every $n\geq 2$ and $\mu\geq \aleph_n,$ we have $(\aleph_n, \mu)$-ITP. 
   \end{abstract}

\




\section{Introduction}

One of the most intriguing research axes in contemporary set theory is the investigation into those properties which are typically associated with large cardinals, though they can be satisfied by small cardinals as well. The tree property is a principle of that sort. Given a regular cardinal $\kappa,$ we say that $\kappa$ satisfies the \emph{tree property} when every $\kappa$-tree has a cofinal branch. The result presented in the present paper concerns the so-called \emph{strong tree property} and \emph{super tree property}, which are two combinatorial principles that generalize the usual tree property. The definition of those properties will be presented in \S \ref{sstp}, for now let us just discuss some general facts about their connection with large cardinals. We know that an inaccessible cardinal is weakly compact if, and only if, it satisfies the tree property. The strong and the super tree properties provide a similar characterization of strongly compact and supercompact cardinals, indeed an inaccessible cardinal is strongly compact if, and only if, it satisfies the strong tree property, while it is supercompact if, and only if, it satisfies the super tree property (the former result follows from a theorem by Jech \cite{Jech}, the latter is due to Magidor \cite{Magidor}). In other words, when a cardinal satisfies one  of the previous properties, it ``behaves like a large cardinal".\\ 


While the previous characterizations date back to the early $1970$s, a systematic study of the strong and the super tree properties has only recently been undertaken by Weiss
(see \cite{WeissPhd} and \cite{Weiss}). He proved in \cite{Weiss} that for every $n\geq 2,$ one can define a model of the super tree property for $\aleph_n,$ starting from a model with a supercompact cardinal. It is natural to ask whether all small cardinals (that is cardinals of the form $\aleph_n$ with $n\geq 2$) can \emph{simultaneously} have the strong or the super tree properties. Fontanella \cite{Fontanella} proved that a forcing construction due to Abraham \cite{Abraham} generalizes to show that the super tree property can hold for two successive cardinals. Cummings and Foreman \cite{CummingsForeman} proved that if there is a model of set theory with infinitely many supercompact cardinals, then one can obtain a model in which every $\aleph_n$ with $n\geq 2$ satisfies the tree property. In the present paper, we  prove that in the Cummings and Foreman's model even the \emph{super} tree property holds at every $\aleph_n$ with $n\geq 2.$ The same result has been proved independently by Unger \cite{Unger}.\\ 

The paper is organized as follows. In \S \ref{sstp} we introduce the strong and the super tree properties. \S \ref{BPP} is devoted to the proof of two preservation theorems. In \S \ref{mainforcing} we define Cummings and Foreman's model. In \S \ref{subsec}, \S \ref{the term forcing} and \S \ref{morepreserv}, we expand that model and we analyze some properties of the new generic extension. Finally, we prove in \S \ref{theorem} that in Cummings and Foreman's model every cardinal $\aleph_n$ (with $n\geq 2$) has the super tree property.

\section{Preliminaries and Notation}

Given a forcing $\mathbb{P}$ and conditions $p,q\in \mathbb{P},$ we use $p\leq q$ in the sense that $p$ is stronger than $q;$ we write $p\vert \vert q$ when $p$ and $q$ are two compatible conditions (i.e. there is a condition $r\in \mathbb{P}$ such that $r\leq p$ and $r\leq q$). A poset $\mathbb{P}$ is \emph{separative} if whenever 
$q\not\leq p,$ then some extension of $q$ in $\mathbb{P}$ is incompatible with $p.$ Every partial order can be turned into a separative poset. Indeed, one can define $p\prec q$ iff all extensions of $p$ are compatible with $q,$ then the resulting equivalence relation, given by $p\sim q$ iff $p\prec q$ and $q\prec p,$ provides a separative poset; we denote by $[p]$ the equivalence class of $p.$\\ 

A forcing $\mathbb{P}$ is \emph{$\kappa$-closed} if, and only if, every descending sequence of conditions of $\mathbb{P}$ of size less than $\kappa$ has a lower bound; 
$\mathbb{P}$ is \emph{$\kappa$-directed closed} if, and only if, for every set of less than $\kappa$ pairwise compatible conditions of $\mathbb{P}$ has a lower bound. We say that 
$\mathbb{P}$ is $<\kappa$-distributive if, and only if, no sequence of ordinals of length less than $\kappa$ is added by $\mathbb{P}.$ $\mathbb{P}$ is $\kappa$-c.c. when every 
antichain of $\mathbb{P}$ has size less than $\kappa;$ $\mathbb{P}$ is $\kappa$-Knaster if, and only if, for all sequence of conditions $\langle p_{\alpha};\ \alpha<\kappa \rangle,$ there is $X\subseteq \kappa$ cofinal such that the conditions of the sequence $\langle p_{\alpha};\ \alpha\in X \rangle$ are pairwise compatible.\\  

Given two forcings $\mathbb{P}$ and $\mathbb{Q},$ we will write 
$\mathbb{P}\equiv \mathbb{Q}$ when $\mathbb{P}$ and $\mathbb{Q}$ are equivalent, namely: 
\begin{enumerate}
\item for every filter $G_{\mathbb{P}}\subseteq \mathbb{P}$ which is generic over $V,$ there exists a filter $G_{\mathbb{Q}}\subseteq \mathbb{Q}$ which is generic over $V,$ and $V[G_{\mathbb{P}}]= V[G_{\mathbb{Q}}] ;$
\item for every filter $G_{\mathbb{Q}}\subseteq \mathbb{Q}$ which is generic over $V,$ there exists a filter $G_{\mathbb{P}}\subseteq \mathbb{P}$ which is generic over $V,$ and $V[G_{\mathbb{P}}]= V[G_{\mathbb{Q}}].$
\end{enumerate}

If $\mathbb{P}$ is any forcing and $\dot{\mathbb{Q}}$ is a $\mathbb{P}$-name for a forcing, then we denote by $\mathbb{P}\ast \dot{\mathbb{Q}}$ the poset 
$\{(p,q);\spazio p\in \mathbb{P}, q\in V^{\mathbb{P}}\textrm{ and }p\force q\in \dot{\mathbb{Q}}\},$
where for every $(p,q), (p',q')\in \mathbb{P}\ast \dot{\mathbb{Q}},$ $(p,q)\leq (p',q')$ if, and only if, $p\leq p'$ and $p\force q\leq q'.$\\

 If $\mathbb{P}$ and $\mathbb{Q}$ are two posets, a \emph{projection} $\pi: \mathbb{Q}\to \mathbb{P}$ is a function such that:
\begin{enumerate}
\item for all $q,q'\in \mathbb{Q},$ if $q\leq q',$ then $\pi(q)\leq \pi(q');$
\item $\pi(1_\mathbb{Q})= 1_{\mathbb{P}};$
\item for all $q\in \mathbb{Q},$ if $p\leq \pi(q),$ then there is $q'\leq q$ such that $\pi(q')\leq p.$ 
\end{enumerate}
We say that \emph{$\mathbb{P}$ is a projection of $\mathbb{Q}$} when there is a projection $\pi: \mathbb{Q}\to \mathbb{P}.$

If $\pi: \mathbb{Q}\to \mathbb{P}$ is a projection and $G_\mathbb{P}\subseteq \mathbb{P}$ is a generic filter over $V,$ define
$$\mathbb{Q}/G_{\mathbb{P}}:=\{ q\in \mathbb{Q};\ \pi(q)\in G_\mathbb{P}\},$$
$\mathbb{Q}/G_\mathbb{P}$ is ordered as a subposet of $\mathbb{Q}.$ The following hold: 
\begin{enumerate}
\item If $G_{\mathbb{Q}}\subseteq \mathbb{Q}$ is a generic filter over  $V$ and $H:= \{p\in \mathbb{P};\ \exists q\in G_{\mathbb{Q}}(\pi(q)\leq p)  \},$ then $H$ is $\mathbb{P}$-generic over $V;$
\item if $G_{\mathbb{P}}\subseteq \mathbb{P}$ is a generic filter over $V,$ and if $G\subseteq \mathbb{Q}/G_\mathbb{P}$ is a generic filter over $V[\mathbb{G_{\mathbb{P}}}],$ then $G$ is $\mathbb{Q}$-generic over $V,$ and $\pi[G]$ generates $G_{\mathbb{P}};$
\item if $G_{\mathbb{Q}}\subseteq \mathbb{Q}$ is a generic filter, and $H:= \{p\in \mathbb{P};\ \exists q\in G_{\mathbb{Q}}(\pi(q)\leq p) \},$ then $G_{\mathbb{Q}}$ is 
$\mathbb{Q}/G_\mathbb{P}$-generic over $V[H].$ That is, we can factor forcing with $\mathbb{Q}$ as forcing with $\mathbb{P}$ followed by forcing with 
$\mathbb{Q}/G_\mathbb{P}$ over $V[G_{\mathbb{P}}].$
\end{enumerate}

Some of our projections $\pi: \mathbb{Q}\to \mathbb{P}$ will also have the following property: for all $p\leq \pi(q),$ there is $q'\leq q$ such that 
\begin{enumerate}
\item $\pi(q')= p,$  
\item for every $q^*\leq q,$ if $\pi(q^*)\leq p,$ then $q^*\leq q'.$
\end{enumerate}

\

Let $\kappa$ be a regular cardinal and $\lambda$ an ordinal, we denote by $\Add(\kappa, \lambda)$ the poset of all partial functions $f:\lambda\to 2$ of size less than $\kappa,$ ordered by reverse inclusion. We use $\Add(\kappa)$ to denote $\Add(\kappa, \kappa).$\\ 


If $V\subseteq W$ are two models of set theory with the same ordinals and $\eta$ is a cardinal in $W,$ we say that $(V,W)$ has the $\eta$-covering property if, and only if, every set $X\subseteq V$ in $W$ of cardinality less than $\eta$ in $W,$ is contained in a set $Y\in V$ of cardinality less than $\eta$ in $V.$\\ 



Assume that $\mathbb{P}$ is a forcing notion in a model $V,$ we will use $V[\mathbb{P}]$ to denote a generic extension by some unspecified $\mathbb{P}$-generic filter.

\begin{lemma} (Easton's Lemma) Let $\kappa$ be regular. If $\mathbb{P}$ has the $\kappa$-chain condition and $\mathbb{Q}$ is $\kappa$-closed, then 
\begin{enumerate}
\item $\force_{\mathbb{Q}} \mathbb{P}\textrm{ has the $\kappa$-chain condition};$ 
\item $\force_{\mathbb{P}} \mathbb{Q}\textrm{ is a $<\kappa$-distributive};$
\item If $G$ is $\mathbb{P}$-generic over $V$ and $H$ is $\mathbb{Q}$-generic over $V,$ then $G$ and $H$ are mutually generic;
\item If $G$ is $\mathbb{P}$-generic over $V$ and $H$ is $\mathbb{Q}$-generic over $V,$ then $(V, V[G][H])$ has the $\kappa$-covering property;
\item If $\mathbb{R}$ is $\kappa$-closed, then $\force_{\mathbb{P}\times \mathbb{Q}} \textrm{ $\mathbb{R}$ is $<\kappa$-distributive}.$
\end{enumerate}  
\end{lemma}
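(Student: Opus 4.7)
The plan is to take (1) as the cornerstone, derive (3) and (4) from (1) plus the basic fact that a $\kappa$-closed forcing adds no $<\kappa$-sequence of ordinals, prove (2) by a mixing argument that is the technical core of the lemma, and finally deduce (5) from (2).

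For (1), I would argue by contradiction. Suppose some $q_0\in\mathbb{Q}$ forces $\dot A$ to be a $\kappa$-sized antichain of $\check{\mathbb{P}}$, enumerated as $\langle \dot A_\alpha:\alpha<\kappa\rangle$. Using the $\kappa$-closure of $\mathbb{Q}$ in $V$, recursively build a descending sequence $\langle q_\alpha:\alpha<\kappa\rangle$ below $q_0$ with $q_\alpha\force \dot A_\alpha=\check p_\alpha$ for some $p_\alpha\in\mathbb{P}$, taking closure-lower-bounds at limits. Since incompatibility in $\mathbb{P}$ is absolute, $\{p_\alpha:\alpha<\kappa\}\in V$ is a genuine $\kappa$-antichain of $\mathbb{P}$, contradicting the $\kappa$-c.c.

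Both (3) and (4) follow from the same template. Given a dense $D\subseteq\mathbb{P}$ in $V[H]$, apply (1) inside $V[H]$ to find a maximal antichain $A\subseteq D$ of size $<\kappa$. Fixing in $V$ a bijection of $\mathbb{P}$ with some ordinal, $A$ is coded by a $<\kappa$-set of ordinals in $V[H]$; since $\kappa$-closure of $\mathbb{Q}$ implies $<\kappa$-distributivity, this code lies in $V$, and hence so does $A$. Being predense in $\mathbb{P}$ in $V$, $A$ meets the $V$-generic $G$, proving $G\cap D\neq\emptyset$. Statement (4) uses the same pull-back: first cover $X$ by a $<\kappa$-set $Y\in V[H]$ of ordinals via the standard $\kappa$-c.c.\ covering argument (available thanks to (1) in $V[H]$), then use $<\kappa$-distributivity of $\mathbb{Q}$ to conclude $Y\in V$; the reduction to sets of ordinals is handled via a rank bound and a $V$-bijection of a fixed $V_\rho^V$ with an ordinal.

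The crux is (2). Given a $\mathbb{P}$-name $\dot f$ for a $\mathbb{Q}$-name $\check\alpha\to{\rm Ord}$ with $\alpha<\kappa$, I would lift it to a $\mathbb{P}\times\mathbb{Q}$-name and work in $V$. Starting from some $(p_0,q_0)$, build by recursion a descending chain $\langle q_\beta:\beta\leq\alpha\rangle$ in $\mathbb{Q}$ and $\mathbb{P}$-names $\dot g_\beta$ for ordinals such that $p_0\force q_\beta\force\dot f(\beta)=\dot g_\beta$. The interesting step is successor: choose a maximal antichain $\{p_i:i<\eta\}$ of $\mathbb{P}$ below $p_0$ of size $\eta<\kappa$ (by $\kappa$-c.c.), then recursively pick $q^{(i+1)}\leq q^{(i)}$ in $\mathbb{Q}$ and ordinals $\gamma_i$ with $(p_i,q^{(i+1)})\force\dot f(\beta)=\check\gamma_i$, taking $\kappa$-closure lower bounds at limits inside $\eta$. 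The resulting $q^{(\eta)}$ decides $\dot f(\beta)$ uniformly over the antichain, so setting $q_{\beta+1}:=q^{(\eta)}$ and $\dot g_\beta:=\{(\check\gamma_i,p_i):i<\eta\}$ works; limits in $\beta$ again use $\kappa$-closure. The main obstacle, and precisely where the two hypotheses mesh, is producing a single $q_{\beta+1}$ that decides $\dot f(\beta)$ uniformly across \emph{all} $\mathbb{P}$-conditions below $p_0$: this is feasible only because the antichain is short (from $\kappa$-c.c.\ of $\mathbb{P}$) and $\mathbb{Q}$ admits lower bounds of short descending chains (from $\kappa$-closure).

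Finally, (5) is a one-line reduction to (2). Since $\mathbb{Q}\times\mathbb{R}$ is $\kappa$-closed, (2) applied with $\mathbb{Q}\times\mathbb{R}$ in place of $\mathbb{Q}$ gives that $\mathbb{Q}\times\mathbb{R}$ is $<\kappa$-distributive in $V[G]$. If $K$ is $\mathbb{R}$-generic over $V[G][H]$, then $V[G][H][K]=V[G][H\times K]$ where $H\times K$ is $(\mathbb{Q}\times\mathbb{R})$-generic over $V[G]$, so any $<\kappa$-sequence of ordinals in $V[G][H][K]$ already lies in $V[G]$, a fortiori in $V[G][H]$. Hence $\mathbb{R}$ is $<\kappa$-distributive in $V[G][H]=V[\mathbb{P}\times\mathbb{Q}]$.
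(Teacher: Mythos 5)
The overall decomposition---prove (1) directly, derive (3), (4) and (5), and treat (2) as the technical core---matches the standard proof of Easton's lemma (as in Cummings--Foreman, Lemma 2.11, to which the paper defers). Parts (1), (3), (4) and (5) are correct, modulo small remarks (in (1) one should note the enumeration of $\dot A$ may be taken injective so the decided $p_\alpha$ are pairwise distinct; this is immediate). The gap is in your proof of (2).

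At the successor step you first fix a maximal antichain $\{p_i : i<\eta\}$ of $\mathbb{P}$ below $p_0$ and then try to find $q^{(i+1)}\leq q^{(i)}$ so that $(p_i,q^{(i+1)})$ decides $\dot f(\beta)$. This is not possible in general: strengthening only the $\mathbb{Q}$-coordinate may never decide $\dot f(\beta)$. A concrete counterexample: take $\mathbb{P}=\Add(\omega,1)$, $\mathbb{Q}$ trivial (or any $\kappa$-closed forcing that plays no role in $\dot f$), and $\dot f$ the canonical $\mathbb{P}$-name for the Cohen real. If $\beta\notin\dom(p_i)$, no strengthening of $q^{(i)}$ alone will make $(p_i,q^{(i+1)})$ decide $\dot f(\beta)$, because only a strengthening of $p_i$ can do so. The statement is $<\kappa$-distributivity of $\mathbb{Q}$ \emph{after} $\mathbb{P}$, so there is no reason $\dot f$ should be ``$\mathbb{Q}$-driven'' below each cell of a pre-chosen antichain.

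The repair is to build the antichain and the $\mathbb{Q}$-descending sequence \emph{simultaneously}. Recursively on $i$: having constructed an antichain $\{p_j : j<i\}$ below $p_0$ and a descending sequence $\langle q^{(j)} : j\leq i\rangle$ (using $\kappa$-closure at limits, possible since $i<\kappa$), if $\{p_j : j<i\}$ is not maximal below $p_0$, pick $p\leq p_0$ incompatible with every $p_j$ and then choose $(p_i,q^{(i+1)})\leq (p,q^{(i)})$, \emph{extending both coordinates}, together with an ordinal $\gamma_i$, so that $(p_i,q^{(i+1)})\force \dot f(\beta)=\check\gamma_i$; this is always possible by density of deciding conditions. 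By $\kappa$-c.c.\ the construction halts at some $\eta<\kappa$ with $\{p_i : i<\eta\}$ maximal below $p_0$; by $\kappa$-closure take $q_{\beta+1}$ a lower bound of $\langle q^{(i)} : i<\eta\rangle$, and define $\dot g_\beta$ from the antichain and the $\gamma_i$ exactly as you do. This is precisely where the two hypotheses ``mesh,'' as you say, but the interleaving of the two recursions is essential; freezing the antichain first breaks the argument. With this change the rest of your proof of (2), and hence your derivation of (5), goes through.
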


For a proof of that lemma see \cite[Lemma 2.11]{CummingsForeman}.\\  

Let $\eta$ be a regular cardinal, $\theta> \eta$ be large enough and $M\prec H_\theta$ of size $\eta$. We say that $M$ is \emph{internally approachable of length $\eta$} if it can be written as the union of an increasing continuous chain  $\langle M_{\xi}: \xi < \eta\rangle $ of elementary submodels of $H(\theta)$ of size less than $\eta,$ such that $\langle M_{\xi}: \xi < \eta' \rangle  \in M_{\eta' +1}$, for every ordinal $\eta' < \eta.$\\





We will assume familiarity with the theory of large cardinals and elementary embeddings, as developed for example in \cite{Kanamori}. 

\begin{lemma} (Laver) \cite{Laver} If $\kappa$ is a supercompact cardinal, then there exists $L: \kappa \to V_{\kappa}$ such that: for all $\lambda,$ for all $x\in H_{\lambda^+},$ there is an elementary embedding $j: V\to M$ with critical point $\kappa$
such that $j(\kappa)>\lambda,$ ${}^\lambda M\subseteq M$ and $j(L)(\kappa)= x.$
\end{lemma}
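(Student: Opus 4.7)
The plan is to build $L$ by a minimal-counterexample recursion and then derive a contradiction from any failure using a supercompactness embedding combined with elementarity.

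Fix a well-ordering $<^*$ of $V_\kappa$, available since $\kappa$ is inaccessible. I will define $L:\kappa \to V_\kappa$ by recursion on $\alpha<\kappa$. Suppose $L\restr \alpha$ has been defined. Call a pair $(\lambda,x)$ a \emph{failure pair} for $L\restr \alpha$ at $\alpha$ if $x \in H_{\lambda^+}$ and no embedding $j:V\to M$ with critical point $\alpha$, $j(\alpha)>\lambda$, and ${}^\lambda M\subseteq M$ satisfies $j(L\restr \alpha)(\alpha)=x$. If such pairs exist, let $(\lambda_\alpha,x_\alpha)$ be lexicographically least (least $\lambda$, then smallest rank, then $<^*$-least), and set $L(\alpha):=x_\alpha$; otherwise $L(\alpha):=\emptyset$. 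At every $\alpha$ relevant to the main argument the minimal $\lambda_\alpha$ falls below $\kappa$, so values stay in $V_\kappa$.

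Next, suppose for contradiction that $L$ is not a Laver function at $\kappa$: then a failure pair at $\kappa$ exists, and I let $(\lambda,x)$ be its lex-least instance. Choose $\theta\geq 2^{\lambda^{<\kappa}}$ and, by supercompactness of $\kappa$, obtain $j:V\to M$ with critical point $\kappa$, $j(\kappa)>\theta$ and ${}^\theta M\subseteq M$; the choice of $\theta$ ensures that the normal fine measure $U$ on $P_\kappa(\lambda)$ derived from $j$ is an element of $M$. By elementarity, $M$ computes $j(L)(\kappa)$ by applying the same recursive clause to $j(L)\restr \kappa = L$, so the value of $j(L)(\kappa)$ is determined by the $M$-internal lex-least failure pair for $L$ at $\kappa$. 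I will show this pair is precisely $(\lambda,x)$, forcing $j(L)(\kappa)=x$ and thus contradicting that $(\lambda,x)$ was a failure pair in $V$ (since $j$ itself is of the required form).

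The identification of the $M$-internal failure pair with the $V$-external one is the main obstacle and splits into two transfers: showing $(\lambda,x)$ really is a failure pair in $M$'s sense, and showing that no lex-smaller $(\lambda',x')$ works there either. Both directions are handled by the fact that $U$ (and, by absorbing into $\theta$, normal fine measures for every $\lambda'\leq \lambda$) lie in $M$: from these measures $M$ can internally form the corresponding ultrapower embeddings, so a would-be $M$-side $\lambda'$-supercompactness embedding witnessing that some pair is \emph{not} a failure can be matched by an actual $V$-side embedding contradicting $V$-minimality, and conversely $V$-side failure is mirrored in $M$ via the same internal embeddings. The calibration of $\theta$ is thus the sole delicate point; once it is in place, the elementarity computation runs uneventfully and produces the desired contradiction.
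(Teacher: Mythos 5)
The paper does not actually prove this lemma; it is stated as background with a citation to Laver's original article, so there is no in-paper argument to compare against. Evaluated on its own terms, your sketch is the standard proof of Laver's theorem (``minimal counterexample'' recursion plus a supercompactness embedding reflecting the least failure), and the overall structure and the contradiction you aim for are right.

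Two places need tightening before this becomes a complete argument. First, the recursive clause as written quantifies over class embeddings $j:V\to M$, which is not a first-order condition; to make the recursion legitimate in ZFC you should replace ``there is an embedding $j$ with $j(L\restr\alpha)(\alpha)=x$'' by ``there is a normal fine ultrafilter $U$ on $P_\alpha\lambda$ whose ultrapower embedding $j_U$ satisfies $j_U(L\restr\alpha)(\alpha)=x$.'' This reformulation is also precisely what makes your $V$--$M$ transfer arguments go through, since those rely on passing between embeddings and measures. Second, the parenthetical ``at every $\alpha$ relevant to the main argument the minimal $\lambda_\alpha$ falls below $\kappa$'' does not by itself guarantee $L:\kappa\to V_\kappa$; the clean fix is to restrict the search at stage $\alpha<\kappa$ to pairs $(\lambda,x)$ with $\lambda<\kappa$ (equivalently $x\in V_\kappa$), which by elementarity becomes a search over $\lambda<j(\kappa)$ when $M$ evaluates $j(L)(\kappa)$, and your chosen $\lambda<\theta<j(\kappa)$ is still in range. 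With those adjustments, and the closure computation $\theta\geq 2^{\lambda^{<\kappa}}$ to place the relevant normal measures in $M$, the transfer of ``failure pair'' status between $V$ and $M$ is the only remaining work and proceeds as you outline.
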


\begin{lemma} (Silver) Let $j: M\to N$ be an elementary embedding between inner models of {\rm ZFC}. Let $\mathbb{P}\in M$ be a forcing and suppose that $G$ is $\mathbb{P}$-generic over $M,$ $H$ is $j(\mathbb{P})$-generic over $N,$ and $j[G]\subseteq H.$ Then, there is a unique $j^*: M[G]\to N[H]$ such that $j^*\restr M= j$ and $j^*(G)= H.$ 
\end{lemma}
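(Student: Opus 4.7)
The natural strategy is to define $j^*$ on the interpretations of $\mathbb{P}$-names. Every element of $M[G]$ has the form $\dot{x}^G$ for some $\dot{x}\in M^{\mathbb{P}}$, and by elementarity $j(\dot{x})$ is a $j(\mathbb{P})$-name in $N$, so its interpretation $j(\dot{x})^H$ makes sense. I would therefore set
\[
j^*(\dot{x}^G) := j(\dot{x})^H
\]
and devote the work to checking that this really is a well-defined elementary embedding with the right properties.

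The key lemma is that for any formula $\varphi(v_1,\dots,v_n)$ of the language of set theory and any names $\dot{x}_1,\dots,\dot{x}_n\in M^{\mathbb{P}}$,
\[
M[G]\models \varphi(\dot{x}_1^G,\dots,\dot{x}_n^G) \iff N[H]\models \varphi(j(\dot{x}_1)^H,\dots,j(\dot{x}_n)^H).
\]
For the forward direction, apply the forcing theorem inside $M$ to find $p\in G$ with $p\force_{\mathbb{P}}^{M}\varphi(\dot{x}_1,\dots,\dot{x}_n)$; by elementarity of $j$, this gives $j(p)\force_{j(\mathbb{P})}^{N}\varphi(j(\dot{x}_1),\dots,j(\dot{x}_n))$; since $j(p)\in j[G]\subseteq H$, the forcing theorem in $N$ yields the conclusion in $N[H]$. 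The converse uses the same argument applied to $\neg\varphi$. Taking $\varphi$ to be $v_1=v_2$ shows that $j^*$ is well-defined, and taking general $\varphi$ shows it is elementary.

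It then remains to verify the two identification properties. For $j^*\restr M = j$, I would evaluate $j^*$ on check-names: for $x\in M$ we have $\check{x}^G=x$, and by elementarity $j(\check{x})=\widecheck{j(x)}$, so $j^*(x)=j(\check{x})^H=\widecheck{j(x)}^H=j(x)$. For $j^*(G)=H$, I would use the canonical name $\dot{G}$ for the generic filter, which satisfies $\dot{G}^G=G$ and, by elementarity, $j(\dot{G})$ is the canonical $j(\mathbb{P})$-name for the generic in $N$, so $j(\dot{G})^H=H$.

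Finally, uniqueness is immediate: any elementary embedding $\tilde{j}:M[G]\to N[H]$ extending $j$ and sending $G$ to $H$ must, by elementarity applied to the definition of the name interpretation, send $\dot{x}^G$ to $j(\dot{x})^H$, so it coincides with $j^*$. I expect the only point requiring care to be the well-definedness/elementarity step above, which is essentially a single appeal to the forcing theorem combined with the hypothesis $j[G]\subseteq H$; everything else is bookkeeping with check-names and the canonical generic name.
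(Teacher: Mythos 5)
Your proposal is correct and follows exactly the approach the paper uses (and which it compresses into a single sentence): define $j^*(\dot{x}^G)=j(\dot{x})^H$ and check via the forcing theorem and $j[G]\subseteq H$ that this is well-defined, elementary, extends $j$, and sends $G$ to $H$. The verifications you spell out (check-names, the canonical generic name, uniqueness) are the standard bookkeeping the paper leaves implicit.
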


\begin{proof} If $j[G]\subseteq H,$ then the map $j^*(\dot{x}^{G})= j(\dot{x})^{H}$ is well defined and satisfies the required properties. \end{proof}

\section{The Strong and the Super Tree Properties}\label{sstp}

We recall the definition of the tree property, for a regular cardinal $\kappa.$

\begin{definition} Let $\kappa$ be a regular cardinal, 
\begin{enumerate}
\item a $\kappa$-tree is a tree of height $\kappa$ with levels of size less than $\kappa;$
\item we say that $\kappa$ has the \emph{tree property} if, and only if, every $\kappa$-tree has a cofinal branch (i.e. a branch of size $\kappa$).  
\end{enumerate}
\end{definition}

The strong and the super tree property concern special objects that generalize the notion of $\kappa$-tree, for a regular cardinal $\kappa.$ 

\begin{definition}\label{main definition} Given $\kappa\geq \omega_2$ a regular cardinal and $\lambda\geq \kappa,$ a \emph{$(\kappa, \lambda)$-tree} is a set $F$ satisfying the following properties:  
\begin{enumerate}
\item for every $f\in F,$ $f: X\to 2,$ for some $X\in [\lambda]^{<\kappa}$
\item for all $f\in F,$ if $X\subseteq \dom(f),$ then $f\restr X\in F;$
\item the set $\lev_X(F):= \{f\in F;\spazio \dom(f)=X \}$ is non empty, for all $X\in [\lambda]^{<\kappa};$
\item $\vert \lev_X(F) \vert<\kappa ,$ for all $X\in [\lambda]^{<\kappa}.$
\end{enumerate}
\end{definition}

When there is no ambiguity, we will simply write $\lev_X$ instead of $\lev_X(F).$ The main difference between $\kappa$-trees and $(\kappa, \lambda)$-trees is the fact that, 
in the former, levels are indexed by ordinals, while in the latter, levels are indexed by \emph{sets of ordinals}. Therefore, the ordering between the levels of a $(\kappa, \lambda)$-tree is not total.  

\begin{definition}\label{branches} Given $\kappa\geq \omega_2$ a regular cardinal, $\lambda\geq \kappa,$ and a $(\kappa, \lambda)$-tree $F,$
\begin{enumerate}
\item  a \emph{cofinal branch} for $F$ is a function $b: \lambda \to 2$ such that $b\restr X\in \lev_X(F),$ for all $X\in[\lambda]^{<\kappa};$
\item an \emph{$F$-level sequence} is a function $D: [\lambda]^{<\kappa}\to F$ such that for every $X\in [\lambda]^{<\kappa},$ $D(X)\in \lev_X(F);$
\item given an $F$-level sequence $D,$ an \emph{ineffable branch} for $D$ is a cofinal branch $b: \lambda \to 2$ such that
$\{X\in [\lambda]^{<\kappa};\spazio b\restr X= D(X) \}$ is stationary. 
\end{enumerate}
\end{definition}

\begin{definition} Given $\kappa\geq \omega_2$ a regular cardinal and $\lambda\geq \kappa,$ 
\begin{enumerate}
\item $(\kappa, \lambda)$-\TP holds if every $(\kappa, \lambda)$-tree has a cofinal branch;
\item $(\kappa, \lambda)$-\ITP holds if for every $(\kappa, \lambda)$-tree $F$ and for every $F$-level sequence $D,$ there is an an ineffable branch for $D;$
\item we say that $\kappa$ satisfies the \emph{strong tree property} if $(\kappa, \mu)$-\TP holds, for all $\mu\geq \kappa;$
\item we say that $\kappa$ satisfies the \emph{super tree property} if 
$(\kappa,\mu)$-\ITP holds, for all $\mu\geq \kappa;$
\end{enumerate}
\end{definition}


\section{The Preservation Theorems}\label{BPP}

It will be important, in what follows, that certain forcings cannot add ineffable branches. The following proposition is due to Silver (see \cite[chap. VIII, Lemma $3.4$]{Kunen} or \cite[Proposition 2.1.12]{WeissPhd}), we include the proof for completeness.

\begin{theorem}\label{closed}(First Preservation Theorem) Let $\theta$ be a regular cardinal and $\mu\geq \theta$ be any ordinal. Assume that $F$ is a $(\theta, \mu)$-tree and $\mathbb{Q}$ is an $\eta^+$-closed forcing with $\eta< \theta\leq 2^{\eta}.$ For every filter $G_{\mathbb{Q}}\subseteq \mathbb{Q}$ generic over $V,$ every cofinal branch for $F$ in $V[G_{\mathbb{Q}}]$ is already in $V.$
\end{theorem}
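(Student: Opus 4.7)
The plan is a standard Silver-style splitting argument. Suppose for contradiction that some $q^*\in\mathbb{Q}$ forces $\dot b$ to be a cofinal branch of $F$ with $\dot b\notin V$. The first step is a splitting lemma: for every $q\leq q^*$ there exist $r_0,r_1\leq q$ and $Y\in[\mu]^{<\theta}$ such that $r_0\force \dot b\restr Y=y_0$ and $r_1\force \dot b\restr Y=y_1$ with $y_0\neq y_1$. Indeed, if this failed for some $q$, then for every $Y\in[\mu]^{<\theta}$ there would be a unique value forced by any extension of $q$ that decides $\dot b\restr Y$; gluing these pointwise defines a function in $V$ equal to $\dot b$ below $q$, contradicting $q\force \dot b\notin V$.

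Before the recursion I would replace $\eta$ by the least cardinal $\bar\eta$ with $2^{\bar\eta}\geq\theta$; the $\eta^+$-closure hypothesis still applies (closure only improves), and moreover $2^{\alpha}<\theta$ for every $\alpha<\bar\eta$. By recursion on $\alpha\leq\bar\eta$ I then construct conditions $q_s\in\mathbb{Q}$ for $s\in 2^{\leq\bar\eta}$, sets $X_\alpha\in[\mu]^{<\theta}$ increasing in $\alpha$, and values $b_s\in\lev_{X_{|s|}}(F)$ such that $q_\emptyset=q^*$, the map $s\mapsto q_s$ is decreasing along $\subseteq$, $q_s\force \dot b\restr X_{|s|}=b_s$, and $b_{s\frown 0}\neq b_{s\frown 1}$. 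At a successor stage I apply the splitting lemma to each $q_s$ to obtain a witness $Y_s$ and extensions $r_0^s,r_1^s\leq q_s$, set $X_{\alpha+1}:=X_\alpha\cup\bigcup_{s\in 2^{\alpha}}Y_s$ (which lies in $[\mu]^{<\theta}$ because $\theta$ is regular and $2^{\alpha}<\theta$), and refine each $r_i^s$ further to decide $\dot b\restr X_{\alpha+1}$. At a limit $\lambda\leq\bar\eta$ I set $X_\lambda:=\bigcup_{\alpha<\lambda}X_\alpha$, use the $\eta^+$-closure of $\mathbb{Q}$ to choose a lower bound $q_s$ of $\langle q_{s\restr\alpha}:\alpha<\lambda\rangle$ (possible since $\lambda\leq\bar\eta$), and strengthen it further to decide $\dot b\restr X_\lambda$.

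At the top level the $b_s$ for $s\in 2^{\bar\eta}$ are $2^{\bar\eta}$ many pairwise distinct elements of $\lev_{X_{\bar\eta}}(F)$: if $s\neq t$ and $r$ is their longest common initial segment, then $q_s$ and $q_t$ extend $q_{r\frown 0}$ and $q_{r\frown 1}$ respectively, which by construction force distinct values on $Y_r\subseteq X_{\bar\eta}$, so $b_s\restr Y_r\neq b_t\restr Y_r$. But by definition of a $(\theta,\mu)$-tree, $|\lev_{X_{\bar\eta}}(F)|<\theta\leq 2^{\bar\eta}$, a contradiction. The main obstacle is the book-keeping that keeps $|X_\alpha|<\theta$ across the recursion; this is precisely what the reduction to the minimal $\bar\eta$ handles, as it ensures that the $2^{\alpha}$-fold union performed at each successor stage stays below $\theta$.
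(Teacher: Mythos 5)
Your proposal is correct and follows essentially the same Silver-style splitting argument as the paper: after reducing to minimal $\bar\eta$ with $2^{\bar\eta}\geq\theta$, you build a binary tree of conditions of height $\bar\eta$ using a splitting lemma at successors and $\eta^+$-closure at limits, and conclude that some level of $F$ has size $\geq 2^{\bar\eta}\geq\theta$. The only cosmetic difference is that you split on a set $Y\in[\mu]^{<\theta}$ where the paper splits on a single coordinate $\beta_t\in\mu$, but the mechanism and the bookkeeping that keeps each $X_\alpha$ in $[\mu]^{<\theta}$ are identical.
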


\begin{proof} We can assume, without loss of generality, that $\eta$ is minimal such that $2^{\eta}\geq \theta.$ Assume towards a contradiction that $\mathbb{Q}$ adds a cofinal branch to $F,$ let $\dot{b}$ be a $\mathbb{Q}$-name for such a function.  
For all $\alpha\leq \eta$ and all $s\in {}^{\alpha}2,$ we are going to define by induction three objects $a_{\alpha}\in [\mu]^{<\theta},$ $f_s\in \lev_{a_{\alpha}}$ and $p_s\in \mathbb{Q}$ such that:

\begin{enumerate}
\item $p_s\force \dot{b}\restr a_{\alpha}= f_s;$ 
\item $f_{s\smallfrown 0}(\beta)\neq f_{s\smallfrown 1}(\beta),$ for some $\beta<\mu;$
\item if $s\subseteq t,$ then $p_t\leq p_s;$
\item if $\alpha< \beta,$ then $a_{\alpha}\subset a_{\beta}.$
\end{enumerate}

Let $\alpha<\eta,$ assume that $a_{\alpha}, f_s$ and $p_s$ have been defined for all $s\in {}^{\alpha}2.$ We define $a_{\alpha+1},$ $f_s,$ and $p_s,$ for all $s\in {}^{\alpha+1}2.$ Let $t$ be in ${}^{\alpha}2,$ we can find an ordinal $\beta_t\in \mu$ and two conditions $p_{t\smallfrown 0}, p_{t\smallfrown 1}\leq p_t$ such that $p_{t\smallfrown 0}\force \dot{b}(\beta_t)= 0$ and $p_{t\smallfrown 1}\force \dot{b}(\beta_t)= 1.$ (otherwise, $\dot{b}$ would be a name for a cofinal branch which is already in $V$).  Let 
$a_{\alpha+1}:=a_{\alpha}\cup \{\beta_t;\ t\in {}^{\alpha}2\},$ then $\vert a_{\alpha+1}\vert <\theta,$ because $2^{\alpha}<\theta$. We just defined, for every $s\in {}^{\alpha+1}2,$ a condition $p_s.$ Now, by strengthening $p_s$ if necessary, we can find $f_s\in \lev_{a_{\alpha+1}}$ such that 
$$p_s\force \dot{b}\restr a_{\alpha+1}= f_s.$$
Finally, $f_{t\smallfrown 0}(\beta_t)\neq f_{t\smallfrown 1}(\beta_t),$ for all $t\in {}^{\alpha}2:$ because $p_{t\smallfrown 0}\force f_{t\smallfrown 0}(\beta_t)= \dot{b}(\beta_t)=0,$ while $p_{t\smallfrown 1}\force f_{t\smallfrown 1}(\beta_t)= \dot{b}(\beta_t)=1.$\\

If $\alpha$ is a limit ordinal $\leq \eta,$ let $t$ be any function in ${}^{\alpha}2.$ Since $\mathbb{Q}$ is $\eta^+$-closed, there is a condition $p_t$ such that $p_t\leq p_{t\restr {\beta}},$ for all $\beta<\alpha.$ Define $a_{\alpha}:= \underset{\beta<\alpha}{\bigcup} a_{\beta}.$ By strengthening $p_t$ if necessary, we can find $f_t\in \lev_{a_{\alpha}}$ such that 
$p_t\force \dot{b}\restr a_{\alpha}=f_t.$ That completes the construction. \\

We show that $\vert \lev_{a_{\eta}}\vert\geq {}^{\eta}2\geq \theta,$ thus a contradiction is obtained. Let $s\neq t$ be two functions in ${}^{\eta}2,$ we are going to prove that 
$f_s\neq f_t.$ Let $\alpha$ be the minimum ordinal less than $\eta$ such that $s(\alpha)\neq t(\alpha),$ without loss of generality $r\smallfrown 0\sqsubset s$ and $r\smallfrown 1\sqsubset t,$ for some $r\in {}^{\alpha}2.$ By construction, 
$$p_s\leq p_{r\smallfrown 0}\force \dot{b}\restr a_{\alpha+1}= f_{r\smallfrown 0}\textrm{ and }p_t\leq p_{r\smallfrown 1}\force \dot{b}\restr a_{\alpha+1}= f_{r\smallfrown 1},$$ where 
$f_{r\smallfrown 0}(\beta)\neq f_{r\smallfrown 1}(\beta),$ for some $\beta.$ Moreover, $p_s\force \dot{b}\restr a_{\eta}=f_s$ and $p_t\force \dot{b}\restr a_{\eta}= f_t,$ hence 
$f_s\restr a_{\alpha+1}(\beta)= f_{r\smallfrown 0}(\beta)\neq f_{r\smallfrown 1}(\beta)= f_t\restr a_{\alpha+1}(\beta),$ thus $f_s\neq f_t.$ That completes the proof. \end{proof}

The following theorem is rather ad hoc. It will be used several times in the final theorem.  

\begin{theorem}\label{mafalda}(Second Preservation Theorem) Let $V\subseteq W$ be two models of set theory with the same ordinals and let $\mathbb{P}\in V$ be a forcing notion and $\kappa$ a cardinal in $V$ such that: 
\begin{enumerate}
\item $\mathbb{P}\subseteq \Add(\aleph_n, \tau)^V,$ for some $\tau>\aleph_n,$\\
 and for every $p\in \mathbb{P},$ if $X\subseteq \dom(p),$ then $p\restr X\in \mathbb{P};$
\item $\aleph_{m}^V=\aleph_m^W,$ for every $m\leq n,$ and $W\models \vert \kappa \vert=\aleph_{n+1};$
\item for every set $X\subseteq V$ in $W$ of size $<\aleph_{n+1}$ in $W,$ there is $Y\in V$ of size $<\kappa$ in $V,$ such that $X\subseteq Y;$
\item in $V,$ we have $\gamma^{<\aleph_n}<\kappa,$ for every cardinal $\gamma<\kappa.$ 
\end{enumerate}
Let $F\in W$ be a $(\aleph_{n+1},\mu )$-tree with $\mu\geq \aleph_{n+1},$ then for every filter $G_{\mathbb{P}}\subseteq \mathbb{P}$ generic over $W,$ every cofinal branch for $F$ in $W[G_{\mathbb{P}}]$ is already in $W.$
\end{theorem}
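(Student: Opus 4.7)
The plan is to adapt the binary-tree-of-conditions argument of Theorem \ref{closed} to this non-closed setting. Assume toward contradiction that some $p_0 \in \mathbb{P}$ forces $\dot{b}$ to be a cofinal branch of $F$ that does not belong to $W$. Working in $W$, I will recursively construct, for each $\alpha \leq \aleph_n$, a set $a_\alpha \in [\mu]^{<\aleph_{n+1}}$ (increasing in $\alpha$) and, for each $s \in 2^{\leq \alpha}$, a condition $p_s \in \mathbb{P}$ together with $f_s \in \lev_{a_{|s|}}(F)$, so that the $p_s$ descend along the binary tree, $p_s \force \dot{b}\restr a_{|s|} = f_s$, and for every $t \in 2^{<\aleph_n}$ some ordinal $\beta_t \in \mu$ witnesses $f_{t\smallfrown 0}(\beta_t) \neq f_{t\smallfrown 1}(\beta_t)$. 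Reaching height $\aleph_n$ would then produce $2^{\aleph_n} \geq \aleph_{n+1}$ pairwise distinct $f_s$ inside $\lev_{a_{\aleph_n}}(F)$, contradicting $|\lev_{a_{\aleph_n}}| < \aleph_{n+1}$.

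Successor stages run exactly as in Theorem \ref{closed}: below each $p_s$ the assumption $p_0 \force \dot{b} \notin W$ yields $\beta_s$ and two extensions deciding $\dot{b}(\beta_s)$ differently, and one sets $a_{\alpha+1} = a_\alpha \cup \{\beta_s : s \in 2^\alpha\}$ and strengthens each new $p_s$ to decide $\dot{b}\restr a_{\alpha+1}$. The bound $|a_{\alpha+1}| < \aleph_{n+1}$ uses $2^\alpha < \aleph_{n+1}$ for $\alpha < \aleph_n$, which is supplied by hypothesis 4 through hypothesis 2 (cardinal arithmetic below $\aleph_n$ agrees in $V$ and $W$).

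The main obstacle lies at each limit $\alpha < \aleph_n$: one needs, for every $s \in 2^\alpha$, a condition $p_s \in \mathbb{P}$ beneath the chain $\langle p_{s\restr \beta} : \beta < \alpha \rangle$. Although $\mathbb{P}$ is $\aleph_n$-closed in $V$—being a restriction-closed subposet of $\Add(\aleph_n, \tau)^V$ by hypothesis 1—the chain itself lives in $W$, so its union, a partial function of $W$-size below $\aleph_n$, need not belong to $V$ and hence may fail to be in $\mathbb{P}$. My plan is to secure the required lower bound by combining the $\aleph_{n+1}$-covering of hypothesis 3 with the arithmetic of hypothesis 4: because the chain has $W$-cardinality below $\aleph_n < \aleph_{n+1}$, covering encloses it in some $C \in V$ of $V$-cardinality below $\kappa$; inside $V$, hypothesis 4 ($\gamma^{<\aleph_n} < \kappa$ for $\gamma < \kappa$) bounds the number of $\mathbb{P}$-lower-bounds with support drawn from $C$, and the $V$-$\aleph_n$-closure of $\mathbb{P}$ together with its restriction-closure from hypothesis 1 then yields a single $\mathbb{P}$-condition extending the chain. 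This harmonization of a $W$-built chain with the $V$-closure of $\mathbb{P}$ is where the technical difficulty concentrates; once each limit stage is handled, the recursion proceeds to $\alpha = \aleph_n$ and delivers the contradiction described above.
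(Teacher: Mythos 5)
Your plan breaks down exactly at the point you yourself flag as the ``main obstacle'': the limit stages of the binary-tree construction, and the fix you sketch does not close the gap. At a limit $\alpha<\aleph_n$ you need, for each $s\in 2^\alpha$, a condition in $\mathbb{P}$ below the $W$-chain $\langle p_{s\restr\beta}:\beta<\alpha\rangle$. The natural candidate is the union $u_s:=\bigcup_{\beta<\alpha}p_{s\restr\beta}$, and the only requirement for $u_s\in\mathbb{P}$ is $u_s\in V$. Hypothesis $3$ encloses $u_s$ (or its domain) in a set $N\in V$ of small $V$-cardinality, and hypothesis $4$ bounds, in $V$, the number of $V$-partial functions with domain inside $N$; but neither step shows that $u_s$ is one of those $V$-functions. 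Counting how many candidates $V$ has is irrelevant to whether the particular $W$-object $u_s$ is among them, and in general it will not be: the branching decisions at successor stages are made in $W$ using the name $\dot{b}$, so the resulting chain is an essentially arbitrary $W$-sequence of elements of $\mathbb{P}$, and its union need not lie in $V$. Nor does restriction-closure of $\mathbb{P}$ help, since that only lets you pass from a condition already in $\mathbb{P}$ to its restrictions, not from an increasing union of conditions to a common extension. There is a secondary problem as well: to keep $|a_{\alpha+1}|<\aleph_{n+1}$ you invoke $2^\alpha<\aleph_{n+1}$ \emph{in $W$} for $\alpha<\aleph_n$, but hypotheses $2$ and $4$ only control $V$-cardinal arithmetic ($\gamma^{<\aleph_n}<\kappa$ in $V$) and the identity of the cardinals $\aleph_m$ for $m\le n$; they say nothing about $(2^\alpha)^W$, which could in principle exceed $\aleph_{n+1}^W$.

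The paper's proof avoids both difficulties by never building a transfinite chain through $\mathbb{P}$ at all. Instead it fixes, for each internally approachable $X\prec H_\chi$ of size $\aleph_n$, a \emph{single} condition $p_X\le p$ deciding $\dot{b}\restr X$, uses internal approachability to push $p_X\restr X$ into a small set $M_X\in X$, and applies the Pressing Down Lemma to stabilize $M_X=M^*$ on a stationary set $E^*$. Only then does the counting appear, and it is applied to the \emph{$V$-objects} $p_X\restr X$ (restrictions of elements of $\mathbb{P}\subseteq V$) lying inside a single covering set $N\in V$, so hypothesis $4$ gives a genuine bound $<\kappa$ in $V$, hence $<\aleph_{n+1}$ in $W$, forcing many $p_X$ to agree on $q:=p_X\restr X$. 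Pairwise compatibility of the $p_X$ then welds the $f_X$ into a branch $b\in W$ and $q\Vdash \dot{b}=b$. In short: your approach needs closure of $\mathbb{P}$ over $W$-sequences, which is precisely what is unavailable; the paper replaces the chain argument by a one-shot stationary reflection argument that only ever counts and compares finished conditions, all of which live in $V$ from the start.
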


\begin{proof} Work in $W.$ Let $\dot{b}\in W^{\mathbb{P}}$ and let $p\in \mathbb{P}$ such that 
$$p\force \dot{b}\textrm{ is a cofinal branch for $F.$}$$ 

We are going to find a condition $q\in \mathbb{P}$ such that $q\vert \vert p$ and for some $b\in W,$ we have $q\force \dot{b}= b.$
Let $\chi$ be large enough, for all $X\prec H_{\chi}$ of size $\aleph_n,$ we fix a condition $p_X\leq p$ and a function $f_X\in Lev_{X\cap\mu}$ such that 
$$p_X\force \dot{b}\restr X= f_X.$$
 Let $S$ be the set of all the structures $X\prec H_{\chi},$ such that $X$ is internally approachable of length $\aleph_n.$ 
Since every condition of $\mathbb{P}$ has size less than $\aleph_n,$ there is, for all $X\in S,$ a set $M_X\in X$ of size less than $\aleph_n$ such that 
$$p_X\restr X\subseteq M_X.$$
  By the Pressing Down Lemma, there exists $M^*$ and a stationary set $E^*\subseteq S$ such that $M^*=M_X,$ for all $X\in E^*.$ The set $M^*$ has size less than $\aleph_n$ in $W,$ hence $A:= (\underset{X\in E^*}{\bigcup} p_X)\restr M^*$ has size less than $\aleph_n$ in $W.$ By the assumption, $A$ is covered by some $N\in V$ of size 
$\gamma<\kappa$ in $V.$ In $V,$ we have 
$\vert [N]^{<\aleph_n}\vert\leq  \gamma^{<\aleph_n}< \kappa.$ It follows that in $W$ there are less than $\aleph_{n+1}$ possible values for 
$p_X\restr M^*.$ Therefore, we can find in $W$ a cofinal $E\subseteq E^*$ and a condition $q\in \mathbb{P},$ such that 
$p_X\restr X=q,$ for all $X\in E.$

\begin{claim} $f_X\restr Y= f_Y\restr X,$ for all $X,Y\in E.$
\end{claim}

\begin{proof} Let $X,Y\in E,$ there is $Z\in E$ with $X,Y,\dom(p_X), \dom(p_Y)\subseteq Z.$ Then, we have $p_X\cap p_Z= p_X\cap (p_Z\restr Z)= p_X\cap q= q,$ thus $p_X\vert \vert p_Z$ and similarly $p_Y\vert\vert p_Z.$ Let $r\leq p_X, p_Z$ and $s\leq p_Y, p_Z,$ then 
$r\force f_Z\restr X= \dot{b}\restr X= f_X$ and $s\force f_Z\restr Y= \dot{b}\restr Y= f_Y.$ It follows that $f_X\restr Y=f_Z\restr (X\cap Y)= f_Y\restr X.$ \end{proof} 

Let $b$ be $\underset{X\in E}{\bigcup} f_X.$ The previous claim implies that $b$ is a function and $$b\restr X= f_X,\textrm{ for all }X\in E.$$ 

\begin{claim} $q\force \dot{b}= b.$
\end{claim}

\begin{proof} We show that for every $X\in E,$ the set $B_X:= \{ s\in \mathbb{P};\ s\force \dot{b}\restr X= b\restr X \}$ is dense below $q.$ Let $r\leq q,$ there is $Y\in E$ such that 
$\dom(r), X\subseteq Y.$ 
It follows that $p_Y\cap r= p_Y\restr Y\cap r= q\cap r= q,$ thus $p_Y\vert \vert r.$ Let $s\leq p_Y, r,$ then $s\in B_X,$ because $s\force \dot{b}\restr X= f_Y\restr X= f_X= b\restr X.$ 
Since $\bigcup\{X\cap \mu; X\in E\}= \mu,$ we have $q\force \dot{b}= b.$
\end{proof}

That completes the proof. \end{proof}

\section{Cummings and Foreman's Iteration}\label{mainforcing}

In this section we discuss a forcing construction which is due to Cummings and Foreman \cite{CummingsForeman}. We will prove, in \S \ref{theorem}, that this iteration produces a model where every $\aleph_n$ (with $n\geq 2$) satisfies the super tree property. A few considerations will help the reader to understand the definition of this iteration. The standard way to produce a model of the super tree property for $\aleph_{n+2}$ (where $n<\omega$) is the following: we start with a supercompact cardinal $\kappa$ -- by Magidor's theorem it is inaccessible and it satisfies the super tree property --, then we turn $\kappa$ into $\aleph_{n+2}$ by forcing with a poset that preserves the super tree property at $\kappa.$ The forcing notion required for that, is a variation of an iteration due to Mitchell that we denote $\mathbb{M}(\aleph_n, \kappa)$ (see \cite{Mitchell}). A naive attempt to construct a model where the super tree property holds simultaneously for two cardinals $\aleph_{n+2}$ and $\aleph_{n+3},$ would be to start with two supercompact cardinals $\kappa<\lambda,$ and force with $\mathbb{M}(\aleph_n, \kappa)$ first, and then with $M(\aleph_{n+1}, \lambda).$ The problem with that approach is that, at the second step of this iteration, we could lose the super tree property at $\kappa,$ that is at $\aleph_{n+2}.$ For this reason, the first step of the iteration must be reformulated so that, not only it will turn $\kappa$ into $\aleph_{n+2}$ by preserving the super tree property at $\kappa,$ but it will also ``anticipate a fragment" of 
$\mathbb{M}(\aleph_{n+1}, \lambda).$ We are going to define a forcing $\mathbb{R}(\tau, \kappa, V, W, L)$ that will constitutes the main brick of the Cummings and Foreman's iteration. 
If $\kappa$ is supercompact cardinal in the model $V,$ then $\mathbb{R}(\tau, \kappa, V, W, L)$ turns $\kappa$ into $\tau^{++}$ and it makes $\tau^{++}$ satisfy the super tree property in a larger model $W.$ The parameter $L$ refers to the Laver function for $\kappa$ (which is in $V$), such function will be used to ``guess" a fragment of the forcing, at the next step of the iteration, that will be defined in the model $W.$\\
 
None of the results of this section are due to the author.       


\begin{definition}\label{CF}
Let $V\subseteq W$ be two models of set theory and suppose that for some $\tau, \kappa,$ we have 
$W\models (\tau<\kappa \textrm{ is regular and }\kappa \textrm{ is inaccessible}).$ Let $\mathbb{P}:= \Add(\tau, \kappa)^V$ and suppose that 
$W\models \mathbb{P} \textrm{ is $\tau^+$-c.c. and $<\tau$-distributive}.$ Let $L\in W$ be a function with $L: \kappa\to (V_{\kappa})^W.$ Define in $W$ a forcing 
$$\mathbb{R}:= \mathbb{R}(\tau, \kappa, V, W, L)$$
as follows. The definition is by induction; for each $\beta\leq \kappa$ we will define a forcing $\mathbb{R}\restr \beta$ and we will finally set $\mathbb{R}:=\mathbb{R}\restr \kappa.$
$\mathbb{R}\restr 0$ is the trivial forcing.\\
$(p,q,f)$ is a condition in $\mathbb{R}\restr \beta$ if, and only if, 
\begin{enumerate}
\item $p\in \mathbb{P}\restr \beta:= \Add(\tau,\beta)^V;$
\item $q$ is a partial function on $\beta,$ $\vert q\vert\leq \tau,$ $\dom(q)$ consists of successor ordinals, and if $\alpha\in \dom(q),$ then $q(\alpha)\in W^{\mathbb{P}\restr \alpha}$ and $\force_{\mathbb{P}\restr \alpha}^W q(\alpha)\in \Add(\tau^+)^{W[\mathbb{P}\restr \alpha]};$
\item $f$ is a partial function on $\beta,$ $\vert f\vert\leq \tau,$ $\dom(f)$ consists of limit ordinals and $\dom(f)$ is a subset of 
$$\{\alpha;\ \force_{\mathbb{R}\restr \alpha}^W L(\alpha) \textrm{ is a canonically $\tau^+$-directed closed forcing }  \}$$
\item If $\alpha\in \dom(f),$ then $f(\alpha)\in W^{\mathbb{R}\restr \alpha}$ and $\force_{\mathbb{R}\restr \alpha}^W f(\alpha)\in L(\alpha).$
\end{enumerate} 

The conditions in $\mathbb{R}\restr \beta$ are ordered in the following way: 
$$(p', q', f')\leq (p, q, f)$$
if, and only if, 
\begin{enumerate}
\item $p'\leq p;$
\item for all $\alpha\in\dom(q),$ $p'\restr \alpha\force q'(\alpha)\leq q(\alpha);$
\item for all $\alpha\in \dom(f),$ $(p', q', f')\restr \alpha\force_{\mathbb{R}\restr \alpha}^W f'(\alpha)\leq f(\alpha). $
\end{enumerate}
\end{definition}

Here after, some easy property of that forcing. 

\begin{lemma}\label{elle} In the situation of Definition \ref{CF}, $\mathbb{R}$ can be projected to $\mathbb{P},$ $\mathbb{R}\restr \alpha\ast L(\alpha),$ and $\mathbb{P}\restr \alpha\ast \Add(\tau^+)^{W[\mathbb{P}\restr \alpha]}.$
\end{lemma}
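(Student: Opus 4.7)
For each of the three target forcings the strategy is the same: write down the natural projection suggested by Definition~\ref{CF}, check that it is monotone and preserves the top condition, and then verify the key third clause of a projection by giving an explicit recipe for lifting a refinement.

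First, the projection $\pi_0:\mathbb{R}\to\mathbb{P}$ is simply $(p,q,f)\mapsto p$. Monotonicity is exactly clause~(1) of the ordering on $\mathbb{R}$, and $\pi_0$ clearly sends the trivial condition to the trivial condition. For the refinement clause, given $p'\leq p$ in $\mathbb{P}$, the triple $(p',q,f)$ is still a condition in $\mathbb{R}$ (the remaining clauses of Definition~\ref{CF} do not involve the first coordinate), and clauses (2) and (3) of the ordering are trivially satisfied because the $q$- and $f$-components are unchanged; so $(p',q,f)\leq(p,q,f)$ and $\pi_0(p',q,f)=p'$.

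For $\pi_1:\mathbb{R}\to\mathbb{R}\restr\alpha\ast L(\alpha)$ (meaningful precisely when $\alpha$ lies in the set appearing in Definition~\ref{CF}(3)), I set
\[
\pi_1(p,q,f)=\bigl((p,q,f)\restr\alpha,\; f(\alpha)\bigr)
\]
if $\alpha\in\dom(f)$, and otherwise replace $f(\alpha)$ by the canonical name for $\mathbf{1}_{L(\alpha)}$. Monotonicity and preservation of the top are immediate from the definitions, using clause~(3) of the order. The crucial point is the refinement clause: given $((\bar p,\bar q,\bar f),\dot g)\leq\pi_1(p,q,f)$, I would form the condition $(p'',q'',f'')$ by taking $p''=\bar p\cup(p\restr[\alpha,\kappa))$, $q''$ equal to $\bar q$ below $\alpha$ and to $q$ above $\alpha$, and $f''$ equal to $\bar f$ below $\alpha$, equal to $\dot g$ at $\alpha$, and equal to $f$ above $\alpha$. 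The size bounds $|q''|,|f''|\leq\tau$ are preserved because we are combining two sets each of size $\leq\tau$; the $\dom(q)$-successor and $\dom(f)$-limit requirements are inherited; the order clauses below $\alpha$ hold by the assumption on $(\bar p,\bar q,\bar f)$, at $\alpha$ they hold by the assumption $\bar{\cdot}\force\dot g\leq f(\alpha)$, and above $\alpha$ they are trivial since $q''$ and $f''$ agree with $q,f$ there. Then $\pi_1(p'',q'',f'')$ equals the given refinement on the nose. The same blueprint, with $q$ in place of $f$ and $\alpha$ a successor, yields $\pi_2:\mathbb{R}\to\mathbb{P}\restr\alpha\ast\Add(\tau^+)^{W[\mathbb{P}\restr\alpha]}$.

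I do not expect any real obstacle here; the statement is essentially bookkeeping, recording that the natural coordinate maps implicit in Definition~\ref{CF} satisfy the projection axioms. The only mildly delicate point is making sure the pieced-together condition $(p'',q'',f'')$ in the $\pi_1$ (resp.\ $\pi_2$) argument still meets the clauses of Definition~\ref{CF}, and the observation that extensions of $\bar p$ beyond $\alpha$ can be chosen to coincide with $p\restr[\alpha,\kappa)$ because $\Add(\tau,\kappa)^V$ has disjoint-support compatibility below and above $\alpha$. Once that is noted, the three projections fall out by direct verification.
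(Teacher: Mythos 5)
The paper gives no proof here, deferring entirely to \cite[Lemma 3.3]{CummingsForeman}, so you had to reconstruct the argument from scratch; the coordinate maps you chose are the correct ones and the gluing strategy (replace the data below $\alpha$, keep the data at and above $\alpha$) is exactly what is required. The case analysis for the order clauses at $\beta<\alpha$, $\beta=\alpha$, $\beta>\alpha$, and the size estimate $\vert q''\vert,\vert f''\vert\leq\tau$, are all sound, as is the observation that disjoint-support compatibility in $\Add(\tau,\kappa)^V$ lets one recombine $\bar p$ with $p\restr[\alpha,\kappa)$.

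There is, however, one point where the object you produce is not literally a condition of $\mathbb{R}$. In the lifting argument for $\pi_1$ you set $f''(\alpha):=\dot g$, where $\dot g$ is the second coordinate of a condition $((\bar p,\bar q,\bar f),\dot g)\in\mathbb{R}\restr\alpha\ast L(\alpha)$. The two-step iteration only guarantees $(\bar p,\bar q,\bar f)\force_{\mathbb{R}\restr\alpha}\dot g\in L(\alpha)$, but clause $(4)$ of Definition~\ref{CF} requires the stronger $\force_{\mathbb{R}\restr\alpha}f''(\alpha)\in L(\alpha)$, i.e.\ forced by the trivial condition. The standard repair is to replace $\dot g$ by a mixed name $\dot g'$ that is forced to equal $\dot g$ below $(\bar p,\bar q,\bar f)$ and to equal the canonical name for $1_{L(\alpha)}$ off that cone; then $\force_{\mathbb{R}\restr\alpha}\dot g'\in L(\alpha)$, so $(p'',q'',f'')$ with $f''(\alpha)=\dot g'$ is a genuine condition, and since $(p'',q'',f'')\restr\alpha=(\bar p,\bar q,\bar f)$ forces $\dot g'=\dot g$, the order clause at $\alpha$ still holds and $\pi_1(p'',q'',f'')$ is equivalent to $((\bar p,\bar q,\bar f),\dot g)$ in the separative quotient, which is all the projection axiom needs. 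The identical fix is required for $q''(\alpha)=\dot s$ in the $\pi_2$ argument, since clause $(2)$ of Definition~\ref{CF} likewise requires $\Add(\tau^+)^{W[\mathbb{P}\restr\alpha]}$-membership to be forced outright. With these two name fix-ups inserted, your proof is complete.
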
 

\begin{proof} See \cite[Lemma 3.3]{CummingsForeman}. \end{proof}

The proof of the following lemma is analogous to the proof of \cite[Lemma 3.6]{CummingsForeman}, we include it for completeness. 

\begin{lemma}\label{tantobrava} In the situation of Definition \ref{CF}, if $g\subseteq \mathbb{P}$ is a generic filter and if $\mathbb{P}$ is $<\tau$-distributive in $W,$ then $\mathbb{R}/ g$ is 
$\tau$-directed closed in $W[g].$ In particular, if $\mathbb{P}$ is $\tau$-closed, then $\mathbb{R}$ is $\tau$-closed.  
\end{lemma}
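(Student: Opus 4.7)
The plan is to establish the $\tau$-directed closure of $\mathbb{R}/g$ coordinate by coordinate, building a common lower bound $(p^*, q^*, f^*)$ for a given directed family $\{(p_i, q_i, f_i) : i < \mu\}$ of size $\mu < \tau$ in $\mathbb{R}/g$. The argument divides cleanly into three pieces, one per coordinate, the principal complication being that the family lives a priori in $W[g]$ while $\mathbb{R}$ itself is defined in $W$.

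For the first coordinate I would exploit $<\tau$-distributivity of $\mathbb{P}$ in $W$: fixing in $W$ an enumeration of $\mathbb{P}$, the sequence $\langle p_i : i < \mu\rangle$ is coded by a $\mu$-sequence of ordinals and therefore belongs to $W$. Since $\tau$ is regular and each $p_i \in \Add(\tau,\kappa)^V$ has size $<\tau$, the pointwise union $\bar p := \bigcup_{i<\mu} p_i$ is a partial function of size $<\tau$, hence a condition of $\mathbb{P}$ below each $p_i$. In $W$, the set $\{r \in \mathbb{P} : r \leq \bar p\} \cup \{r : r \perp p_j \text{ for some } j\}$ is dense in $\mathbb{P}$; genericity together with the filter property of $g$ produces a $p^* \in g$ which, being compatible in $g$ with every $p_j$, must fall in the first piece and so satisfies $p^* \leq \bar p \leq p_i$ for each $i$.

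For the other two coordinates the key observation is that $(p_j, q_j, f_j) \leq (p_i, q_i, f_i)$ forces $\dom(q_i) \subseteq \dom(q_j)$ and $\dom(f_i) \subseteq \dom(f_j)$; consequently, for each $\alpha \in \bigcup_i \dom(q_i)$ the subfamily $\{q_i(\alpha) : \alpha \in \dom(q_i)\}$ is, below $p^*\restr \alpha$, a directed system of at most $\tau$-many $\mathbb{P}\restr \alpha$-names for conditions of the $\tau^+$-directed closed poset $\Add(\tau^+)^{W[\mathbb{P}\restr \alpha]}$. I would let $q^*(\alpha)$ be the canonical name for the union of those interpretations, which is forced to have size $\leq \tau < \tau^+$ and thus to be a condition lying below every $q_i(\alpha)$. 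The coordinate $f^*(\alpha)$ is defined analogously, using the canonical $\tau^+$-directed closure supplied by the standing hypothesis that $L(\alpha)$ is canonically $\tau^+$-directed closed. A direct check then gives $(p^*, q^*, f^*) \in \mathbb{R}/g$ as a common lower bound.

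The ``in particular'' clause follows by specialization: if $\mathbb{P}$ is $\tau$-closed in $W$, the same recipe is carried out in $W$ directly, with $\bar p$ itself serving as the first-coordinate lower bound and no appeal to the generic $g$. The main obstacle is not conceptual but bookkeeping: one must verify that $|\dom(q^*)|, |\dom(f^*)| \leq \tau$ (which uses $\mu < \tau$ and regularity of $\tau$) and that the canonical union of names is a bona fide $\mathbb{P}\restr \alpha$-name forced below $p^*\restr \alpha$ to realise the required inequalities. The substantive content is simply the pullback into $W$ via $<\tau$-distributivity, combined with the directed closure of the subforcings $\Add(\tau^+)^{W[\mathbb{P}\restr \alpha]}$ and $L(\alpha)$.
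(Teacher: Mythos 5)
Your overall approach matches the paper's: pull the whole sequence of conditions back into $W$ via $<\tau$-distributivity, and then build a lower bound coordinate by coordinate. The treatment of the first coordinate (getting $p^* \in g$ below every $p_i$) and of the $q$-coordinate is sound, and using the concrete union of partial functions in place of an abstract appeal to $\tau^+$-directed closure for $\Add(\tau^+)^{W[\mathbb{P}\restr\alpha]}$ is a harmless variant.

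The place where you gloss over a real subtlety is the $f$-coordinate. You say $f^*(\alpha)$ is ``defined analogously'' to $q^*(\alpha)$, but it cannot be: the names $q_i(\alpha)$ are $\mathbb{P}\restr\alpha$-names, and $p^*\restr\alpha$ alone already suffices to witness that they are pairwise compatible; but the names $f_i(\alpha)$ are $\mathbb{R}\restr\alpha$-names, and $L(\alpha)$ is a name for a forcing living in $W[\mathbb{R}\restr\alpha]$. To invoke the (canonical) $\tau^+$-directed closure of $L(\alpha)$ and obtain a single name $f^*(\alpha)$ below all the $f_i(\alpha)$'s, you need a condition of $\mathbb{R}\restr\alpha$ that forces the family to be pairwise compatible. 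The natural such condition is the partial lower bound $(p^*, q^*, f^*)\restr\alpha$, which must therefore already have been built. So the definition of $f^*$ has to proceed by recursion on $\alpha$: assume $f^*(\beta)$ constructed for all $\beta<\alpha$ so that $(p^*,q^*,f^*)\restr\alpha$ is below each $(p_i,q_i,f_i)\restr\alpha$, then use this condition to force pairwise compatibility of the $f_i(\alpha)$'s and extract $f^*(\alpha)$. This recursion is exactly what the paper spells out, and it is the one step in the argument that is not a literal replay of the $q$-coordinate; as written, your proposal does not acknowledge it. One smaller point: since the definition of $\tau$-directed closed in this paper asks only that the conditions be pairwise compatible (not that the family be $\leq$-linearly ordered), one cannot assume $\dom(q_i)\subseteq\dom(q_j)$ for $i<j$; one should instead, for each $\alpha$ in $\bigcup_i\dom(q_i)$, work with the subfamily of those $i$ for which $\alpha\in\dom(q_i)$, as the paper does.
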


\begin{proof} In $W[g],$ let $\langle (p_i, q_i, f_i);\ i<\gamma \rangle$ be a sequence of less than $\tau$ conditions of $\mathbb{R}.$ Since $\mathbb{P}$ is $<\tau$-distributive, the sequence belongs to $W.$ By definition of $\mathbb{R}/ g,$ we have $p_i\in g$ for every $g,$ so we can fix $p\leq p_i,$ for every $i<\gamma$ (take for example $p\in g$ such that $p\force p_i\in \dot{g} \textrm{ for all }i$). We define a function $q$ with domain $\underset{i<\gamma}{\bigcup}\dom{q_i}$ as follows. For every $\alpha\in \dom(q_{i_{\alpha}}),$ we have $$p\restr \alpha\force \langle q_i(\alpha);\ i_{\alpha}\leq i<\gamma \rangle \textrm{ are pairwise compatible conditions in }\Add(\tau^+)^{W[\mathbb{P}\restr \alpha]}.$$ Therefore, there is 
$q(\alpha)\in W^{\mathbb{P}\restr \alpha}$ such that $p\restr \alpha\force q(\alpha)\leq q_i(\alpha) \textrm{ for every }i<\gamma.$ 
Now we define a function $f$ with domain $\underset{i<\gamma}{\bigcup}\dom(f_i).$ We define $f(\alpha),$ by induction on $\alpha,$ so that 
$(p, q, f)\restr \alpha$ is a lower bound for the sequence $\langle (p_i, q_i, f_i)\restr \alpha;\ i<\gamma \rangle.$ Assume that $f(\beta)$ has been defined for every $\beta<\alpha,$ then $$(p,q,f)\restr \alpha\force \langle f_i(\alpha);\ i<\gamma \rangle \textrm{ are pairwise compatible conditions in }L(\alpha).$$ By definition, $L(\alpha)$ is a name for a $\tau^+$-directed closed forcing in $W[\mathbb{R}\restr \alpha],$ so there is $f(\alpha)\in W^{\mathbb{R}\restr \alpha}$ such that 
$(p,q,f)\restr \alpha\force f(\alpha)\leq f_i(\alpha), \textrm{ for every }i<\gamma.$ That completes the definition of $f.$ Finally, the condition $(p,q,f)$ is a lower bound for the sequence $\langle (p_i, q_i, f_i);\ i<\gamma \rangle.$ \end{proof}

\begin{definition}\label{CFiteration} (Cummings and Foreman's Iteration) We consider $\langle \kappa_n;\ n<\omega \rangle$ an increasing sequence of supercompact cardinals. For every $n<\omega,$ let $L_n: \kappa_n\to V_{\kappa_n}$ be the Laver function for $\kappa_n.$ We define by induction a forcing iteration $\mathbb{R}_{\omega}$ of length $\omega$ and 
we let $G_{\omega}$ be a generic filter for $\mathbb{R}_{\omega}$ over $V.$ 
\begin{enumerate}
\item The first stage of the iteration $\mathbb{R}_1$ is $\mathbb{Q}_0:= \mathbb{R}(\aleph_0, \kappa_0, V, V, L_0);$ we let $G_0\subseteq \mathbb{Q}_0$ be generic over $V;$
\item we define a $\mathbb{Q}_0$-name $\dot{L}_1$ as follows, we let
$\dot{L}_1^{G_0}(\alpha):= L_1(\alpha)^{G_0}$ if $L_1(\alpha)$ is a $\mathbb{Q}_0$-name and $\dot{L}_1^{G_0}(\alpha):=0,$ otherwise.  Then 
$\dot{\mathbb{Q}}_1$ is the canonical name for $\mathbb{R}(\aleph_1^V, \kappa_1, V, V[G_0], \dot{L_1}^{G_0}).$ 
We let $\mathbb{R}_2:= \mathbb{Q}_0\ast \dot{\mathbb{Q}}_1$ and we fix $G_1\subseteq \mathbb{Q}_1$ generic over $V[G_0].$
\item Suppose $\mathbb{R}_n:= \mathbb{Q}_0\ast ...\ast \dot{\mathbb{Q}}_{n-1}$ and $G_0, ...G_{n-1}$ have been defined. We define an $\mathbb{R}_n$-name $\dot{L}_n$ by 
$\dot{L}_n^{G_n}(\alpha):= L_n(\alpha)$ if $L_n(\alpha)$ is a $\mathbb{R}_n$-name and $\dot{L}_n^{G_n}(\alpha):=0,$ otherwise. Then, let $V_{n-1}:= V[G_0]...[G_{n-1}]$ and 
let $\dot{\mathbb{Q}}_n$ be a name for 
$\mathbb{R}(\kappa_{n-2}, \kappa_n, V_{n-2}, V_{n-1}, L^*_n),$ where $L^*_n$ is the interpretation of $\dot{L}_n$ in $V_{n-1}.$
Finally, we let $\mathbb{R}_{n+1}:= \mathbb{Q}_0\ast ...\ast \dot{\mathbb{Q}}_n$ and we fix $G_n\subseteq \mathbb{Q}_{n}$ generic over $V_{n-1}.$
\item $\mathbb{R}_{\omega}$ is the inverse limit of $\langle \mathbb{R}_n;\ n<\omega \rangle.$ 
\end{enumerate}
\end{definition}

The following lemma will prove that the previous definition is legitimate. In the statement of the lemma, when we refer to ''$\aleph_i$'' we mean $\aleph_i$ in the sense of 
$V[\mathbb{R}_n].$

\begin{lemma}\label{fondamentale} Let $n\geq 1,$ in $V[\mathbb{R}_n]$ we define $\mathbb{P}_n:= \Add(\aleph_n, \kappa_n)^{V[\mathbb{R}_{n-1}]}$ and 
$\mathbb{U}_n:= \{(0,q,f);\ (0,q,f)\in \mathbb{Q}_n \},$ ordered as a subset of $\mathbb{Q}_n.$ The following hold: 
\begin{enumerate}
\item $V[\mathbb{R}_n]\models 2^{\aleph_i}=\aleph_{i+2}= \kappa_i, \textrm{ for $i<n,$ and $\kappa_j$ is inaccessible for every $j\geq n.$}$
\item $V[\mathbb{R}_n]\models \mathbb{Q}_n \textrm{ is $<\aleph_n$-distributive, $\kappa_n$-Knaster, $\aleph_{n-1}$-directed closed and has size $\kappa_n.$}$ 
\item All cardinals up to $\aleph_{n+1}$ are preserved in $V[\mathbb{R}_n\ast \dot{\mathbb{Q}}_n].$
\item $V[\mathbb{R}_n]\models (\mathbb{Q}_n \textrm{ is a projection of $\mathbb{P}_n\times \mathbb{U}_n),$}$ and 
$V[\mathbb{R}_n\ast \dot{\mathbb{P}}_n]\subseteq V[\mathbb{R}_n\ast \dot{\mathbb{Q}}_n]\subseteq V[\mathbb{R}_n\ast (\dot{\mathbb{P}}_n\times \dot{\mathbb{U}}_n)].$
\item $V[\mathbb{R}_n]\models \mathbb{P}_n\times \mathbb{U}_n \textrm{ is $\kappa_n$-c.c.}$
\item $V[\mathbb{R}_n]\models \mathbb{U}_n \textrm{ is $\aleph_{n+1}$-directed closed and $\kappa_n$-c.c.}$
\item Let $K_n\ast G_n\subseteq \mathbb{R}_n\ast \dot{\mathbb{Q}}_n$ be any generic filter over $V,$ and in $V[K_n\ast G_n]$ let 
$\mathbb{S}_n:= (\mathbb{P}_n\times \mathbb{U}_n)/ G_n,$ then 
$$V[K_n\ast G_n]\models \mathbb{S}_n \textrm{ is $<\aleph_{n+1}$-distributive, $\aleph_n$-closed and $\kappa_n$-c.c.}.$$
\item $\Add(\aleph_n, \eta)^{V[\mathbb{R}_{n-1}]}$ is $\aleph_{n+1}$-Knaster in $V[\mathbb{R}_n\ast \dot{\mathbb{Q}}_n],$ for any ordinal $\eta.$
\item $V[\mathbb{R}_n\ast \dot{\mathbb{Q}}_n]\models \Add(\aleph_{n+1}, \eta)^{V[\mathbb{R}_n]} \textrm{ is $<\aleph_{n+1}$-distributive and $\kappa_n$-Knaster,}$ for any ordinal $\eta.$
\item All $\aleph_n$-sequences of ordinals from $V[\mathbb{R}_n\ast \dot{\mathbb{Q}}_n]$ are in $V[\mathbb{R}_n\ast \dot{\mathbb{P}}_n].$
\end{enumerate}
\end{lemma}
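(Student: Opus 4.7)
The plan is to proceed by induction on $n \geq 1$, carrying all ten items as a joint inductive hypothesis. The base case $n = 1$ is a direct computation from the definition of $\mathbb{Q}_0 = \mathbb{R}(\aleph_0, \kappa_0, V, V, L_0)$, using \GCH below $\kappa_0$ as arranged by Laver preparation. At the inductive step one first has to verify that $\mathbb{Q}_n = \mathbb{R}(\kappa_{n-2}, \kappa_n, V_{n-2}, V_{n-1}, L_n^*)$ actually satisfies the hypotheses of Definition \ref{CF}: one needs the Cohen piece $\Add(\kappa_{n-2}, \kappa_n)^{V_{n-2}}$ to be $\kappa_{n-2}^+$-c.c.~and $<\kappa_{n-2}$-distributive in the larger model $V_{n-1}$. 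This is precisely where item (10) at the previous stage -- asserting that $V_{n-1}$ and $V_{n-2}[\mathbb{P}_{n-1}]$ agree on $\aleph_{n-1}$-sequences of ordinals -- combines with Easton's Lemma on the splitting $\mathbb{Q}_{n-1} \leftarrow \mathbb{P}_{n-1} \times \mathbb{U}_{n-1}$ to deliver what is required.

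With $\mathbb{Q}_n$ well-defined, items (1)--(3) follow from standard forcing bookkeeping. Cardinal arithmetic in (1) is driven by the fact that $\mathbb{Q}_{n-1}$ collapses $\kappa_{n-1}$ to $\aleph_{n+1}$ while being $\kappa_{n-1}$-Knaster (by the inductive (2)), so all higher inaccessibles survive; the Cohen piece then produces $2^{\aleph_n} = \kappa_n = \aleph_{n+2}$. Distributivity and $\aleph_{n-1}$-directed closure of $\mathbb{Q}_n$ in (2) follow directly from Lemma \ref{tantobrava} applied to the parameters $(\kappa_{n-2}, \kappa_n)$, while the $\kappa_n$-Knaster property is proved by a standard $\Delta$-system argument on the supports (of size $\leq \kappa_{n-2}$), using the inductive (1) to count names. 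Preservation of cardinals up to $\aleph_{n+1}$ in (3) is a routine consequence of (2) together with the chain condition of the lower iteration.

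Items (4)--(9) all hinge on the factorization $\mathbb{Q}_n \leftarrow \mathbb{P}_n \times \mathbb{U}_n$, where $\mathbb{P}_n$ is the Cohen coordinate and $\mathbb{U}_n$ is the set of conditions with trivial first coordinate. The map $\pi(p,(0,q,f)) = (p,q,f)$ satisfies the conditions of Lemma \ref{elle} essentially because the $q$- and $f$-coordinates are names that do not interact with independent modifications of the Cohen piece. The product's $\kappa_n$-c.c.~in (5) reduces to the separate Knaster property of each factor; the $\aleph_{n+1}$-directed closure of $\mathbb{U}_n$ in (6) comes either from Lemma \ref{tantobrava} applied to the $\mathbb{U}_n$-conditions or by direct inspection, since every nontrivial coordinate in a $\mathbb{U}_n$-condition is a name for an element of an $\aleph_{n+1}$-directed closed forcing. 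For (7) the quotient $\mathbb{S}_n$ inherits its closure from $\mathbb{U}_n$ and its chain condition from the product via Easton's Lemma; for (8) and (9) one shows that further Cohen forcings remain Knaster after passing through $\mathbb{Q}_n$ by counting nice names, again combining the inductive (1) with Easton.

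Finally, item (10) follows from the factorization plus the $\aleph_{n+1}$-directed closure of $\mathbb{U}_n$: any $\aleph_n$-sequence of ordinals in $V[\mathbb{R}_n \ast \dot{\mathbb{Q}}_n]$ lies in $V[\mathbb{R}_n \ast (\dot{\mathbb{P}}_n \times \dot{\mathbb{U}}_n)]$ by (4), and since $\mathbb{U}_n$ adds no such sequences over $V[\mathbb{R}_n]$ by Easton, the sequence must already live in $V[\mathbb{R}_n \ast \dot{\mathbb{P}}_n]$. The main obstacle throughout is not any single step but rather the bookkeeping of which model each object lives in: $\mathbb{Q}_n$ is defined in $V_{n-1}$ but its Cohen piece comes from $V_{n-2}$, so at nearly every step one must invoke item (10) at the previous level (or Easton's Lemma applied to the factorization there) to transfer chain-condition, distributivity, or closure statements between these two models. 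Once this discipline is set up, the ten items fall out in the listed order, with (4) as the conceptual pivot and (10) as the key technical clause that feeds the next round of the induction.
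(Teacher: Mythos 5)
The paper does not prove this lemma at all — it simply cites Cummings and Foreman \cite[Lemma 4.5]{CummingsForeman} (and Lemmas 3.8, 3.9, 3.11, 3.20 there for the more technical items). Your sketch reproduces, at a reasonable level of detail, the inductive strategy of the cited argument: a simultaneous induction on all ten items, with the projection $\mathbb{P}_n\times\mathbb{U}_n\to\mathbb{Q}_n$ as the structural pivot, Easton's Lemma for the chain-condition/closure transfers, and item (10) feeding the next stage's verification that $\mathbb{Q}_n$ meets the hypotheses of Definition \ref{CF}; so this is essentially the same approach.

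Two small inaccuracies worth flagging: the ground-model $\GCH$ below $\kappa_0$ is an assumption on $V$ (or a separate preliminary forcing), not something ``arranged by Laver preparation'' — the function $L_0$ in Definition \ref{CFiteration} is a Laver diamond used only to guess names, not an indestructibility iteration. And for the verification that $\mathbb{P}_n=\Add(\aleph_n,\kappa_n)^{V_{n-2}}$ is $\kappa_{n-2}^+$-c.c.\ and $<\kappa_{n-2}$-distributive in $V_{n-1}$, the most direct inductive input is item (9) at stage $n-1$ (which asserts exactly $\kappa_{n-1}$-Knasterness and $<\aleph_n$-distributivity of $\Add(\aleph_n,\eta)^{V_{n-2}}$ in $V_{n-1}$), with item (10) and Easton's Lemma sitting behind its proof rather than being the item cited directly. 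These are bookkeeping slips rather than gaps in the argument.
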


\begin{proof} See \cite[Lemma 4.5]{CummingsForeman} (for claim $5.$ and $10.$ see \cite[Lemma 3.11]{CummingsForeman}, for claim $6.$ see \cite[Lemma 3.8 and 3.9]{CummingsForeman}, finally, claim $7.$ corresponds to \cite[Lemma 3.20]{CummingsForeman})\end{proof}


In the following sections, we will use the previous lemma repeatedly and without comments. 

\begin{definition}\label{provenzano} In the situation of Definition \ref{CF}, let $\beta<\kappa$ and $X_{\beta}$ be $\mathbb{R}\restr \beta$-generic over $W,$ we define  
$\mathbb{R}^*:= \mathbb{R}/ X_{\beta}$ (i.e. $\mathbb{R}^*:= \{ r\in \mathbb{R};\ r\restr \beta\in X_\beta\}).$ $\mathbb{R}^*$ is ordered as a subposet of $\mathbb{R}.$ 
We also let $\mathbb{U}^*:= \{ (0,q,f);\ (0,q,f)\in \mathbb{R}^* \},$
ordered as a suborder of $\mathbb{R}^*.$
Finally, $\mathbb{P}^*:=\{p\in \mathbb{P};\ (p,0,0)\in \mathbb{R}^* \},$ ordered as a suborder of $\mathbb{P}.$
 \end{definition}

\begin{lemma}\label{stella} In the situation of Definition \ref{provenzano}, the following hold: 
\begin{enumerate}
\item the function $\pi: \mathbb{P}^*\times \mathbb{U}^*\to \mathbb{R}^*$ defined by $\pi(p,(0,q,f))\mapsto (p,q,f)$ is a projection;
\item $\mathbb{U}^*$ is $\tau^+$-closed in $W[X_{\beta}].$
\end{enumerate}
\end{lemma}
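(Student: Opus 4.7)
For part (1), my plan is to verify the three projection axioms. Well-definedness of $\pi$ reduces to showing $(p,q,f)\in\mathbb{R}^*$ whenever $p\in\mathbb{P}^*$ and $(0,q,f)\in\mathbb{U}^*$: both $(p\restr\beta,0,0)$ and $(0,q\restr\beta,f\restr\beta)$ lie in $X_\beta$, and a routine check shows that $(p\restr\beta,q\restr\beta,f\restr\beta)$ is the greatest common refinement of the two in $\mathbb{R}\restr\beta$ (every common refinement is $\leq$ it), so upward closure of the filter $X_\beta$ places $(p\restr\beta,q\restr\beta,f\restr\beta)$ in $X_\beta$. Order-preservation of $\pi$ is immediate from the coordinatewise definition of $\leq$ in $\mathbb{R}$, together with the fact that what $0$ forces any condition forces. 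For the key projection property, given $(r_p,r_q,r_f)\leq(p,q,f)$ in $\mathbb{R}^*$, I would set $p':=r_p$ and construct $u'=(0,q',f')$ via the standard mixed-name trick: for each $\alpha$ in the relevant domain, let $q'(\alpha)$ be a $\mathbb{P}\restr\alpha$-name that equals $r_q(\alpha)$ whenever $r_p\restr\alpha$ is in the generic and equals $q(\alpha)$ (or the trivial condition if $\alpha\notin\dom(q)$) otherwise; define $f'(\alpha)$ analogously. One checks $0\force q'(\alpha)\leq q(\alpha)$ and $r_p\restr\alpha\force q'(\alpha)=r_q(\alpha)$, yielding $(0,q',f')\leq(0,q,f)$ and $\pi(r_p,(0,q',f'))=(r_p,q',f')\leq(r_p,r_q,r_f)$. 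Membership $(0,q',f')\in\mathbb{U}^*$ follows by repeating the filter argument above, applied to $(r_p,r_q,r_f)\restr\beta\in X_\beta$.

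For part (2), take a descending sequence $\langle(0,q_i,f_i):i<\sigma\rangle$ in $\mathbb{U}^*$ with $\sigma\leq\tau$, living in $W[X_\beta]$. I would construct a lower bound $(0,q,f)$ componentwise, setting $\dom(q):=\bigcup_i\dom(q_i)$ and $\dom(f):=\bigcup_i\dom(f_i)$, each of cardinality $\leq\tau$. For each $\alpha\in\dom(q)$, the sequence of $\mathbb{P}\restr\alpha$-names $\langle q_i(\alpha): \alpha\in\dom(q_i)\rangle$ is forced by $0$ to be a descending sequence in $\Add(\tau^+)^{W[\mathbb{P}\restr\alpha]}$, which is $\tau^+$-closed, so a name $q(\alpha)$ forced by $0$ to be a lower bound exists. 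For each $\alpha\in\dom(f)$, the names $f_i(\alpha)$ are forced by $0$ to be pairwise compatible conditions in the $\tau^+$-directed-closed forcing $L(\alpha)$ (clause (3) of Definition \ref{CF}), producing a name $f(\alpha)$ for a common lower bound. By construction $(0,q,f)\leq(0,q_i,f_i)$ for every $i$, so it remains only to place this triple inside $\mathbb{U}^*$.

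The main technical obstacle I anticipate is precisely the last verification: ensuring $(0,q\restr\beta,f\restr\beta)\in X_\beta$. Simply choosing lower-bound names in $W$ does not automatically put their restrictions into the filter. The intended resolution is to work inside $W[X_\beta]$, which by Lemma \ref{elle} decodes the generic interpretations of $q_i(\alpha), f_i(\alpha)$ for $\alpha<\beta$; choosing the lower-bound names coherently with these interpretations, one can then invoke a density argument (the set of common lower bounds of $\{(0,q_i\restr\beta,f_i\restr\beta):i<\sigma\}$ is open dense below any element of $X_\beta$ lying below all of these) to place the restricted triple in $X_\beta$, using that each $(0,q_i\restr\beta,f_i\restr\beta)$ already belongs to $X_\beta$.
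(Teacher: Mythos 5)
The paper gives no argument of its own here --- its ``proof'' is simply a citation to \cite[Lemmas 3.24, 3.25]{CummingsForeman} --- so your proposal has to stand on its own merits rather than be matched against an in-paper argument.

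Your treatment of part (1) is sound. Well-definedness via ``$(p\restr\beta,q\restr\beta,f\restr\beta)$ is the greatest common refinement of $(p\restr\beta,0,0)$ and $(0,q\restr\beta,f\restr\beta)$, hence lies in the filter $X_\beta$'' is a correct and clean observation, order-preservation is immediate, and the mixed-name construction for the key projection clause is the standard device and works exactly as you describe (including the membership verification $(0,q'\restr\beta,f'\restr\beta)\in X_\beta$, which follows because $(r_p,r_q,r_f)\restr\beta\in X_\beta$ lies below it).

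Part (2) has a genuine gap, and it sits exactly where you flagged it. Your componentwise construction of $(0,q,f)$ is carried out \emph{in $W$}: for each $\alpha$ you pick a $\mathbb{P}\restr\alpha$-name $q(\alpha)$ in $W$ forced to be a lower bound of $\langle q_i(\alpha)\rangle_i$, and likewise for $f(\alpha)$. But the decreasing sequence $\langle(0,q_i,f_i):i<\sigma\rangle$ lives only in $W[X_\beta]$, and for $\sigma=\tau$ it need not lie in $W$: the forcing $\mathbb{R}\restr\beta$ projects onto $\mathbb{P}\restr\beta=\Add(\tau,\beta)^V$, which is merely $<\tau$-distributive and manifestly adds new $\tau$-sequences, so $W[X_\beta]$ has new $\tau$-sequences of $W$-objects. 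Thus the family $\langle q_i(\alpha)\rangle_i$ is in general not a $W$-sequence, and there is no $W$-name $q(\alpha)$ with $0\force_{\mathbb{P}\restr\alpha} q(\alpha)\leq q_i(\alpha)$ for all $i$; the entire componentwise construction in $W$ breaks down before you ever reach the membership question.

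The ``density argument'' you propose to patch the membership problem then cannot be invoked: the set of common lower bounds of $\{(0,q_i\restr\beta,f_i\restr\beta):i<\sigma\}$ is only definable from the sequence, which is in $W[X_\beta]$ and not in $W$, so it is not a $W$-dense set that $X_\beta$ is guaranteed to meet. Moreover, even granting that, meeting it would produce \emph{some} $r\in X_\beta$ lying below all the restrictions, not membership of the particular triple $(0,q\restr\beta,f\restr\beta)$ you constructed in $W$; and in fact the mere existence of such an $r$ in $X_\beta$ is exactly the $\tau^+$-completeness of $X_\beta$ that one cannot assume ($\mathbb{R}\restr\beta$ is not $\tau^+$-closed). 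A correct proof along the lines of \cite[Lemma 3.25]{CummingsForeman} must work in $W$ with a \emph{name} $\dot s$ for the sequence and a condition $r_0\in X_\beta$ forcing $\dot s$ decreasing in $\dot{\mathbb{U}^*}$, and then build a single master condition $(0,q^*,f^*)\in W$ (with $r_0\leq(0,q^*\restr\beta,f^*\restr\beta)$) that anticipates every possible interpretation of $\dot s$ below $r_0$; the componentwise-lower-bound-in-$W$ approach you sketched only applies to the portions of the sequence that $\mathbb{R}\restr\beta$ has not altered.

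Note also that Lemma \ref{elle} does not quite say what you attribute to it: it records that $\mathbb{R}$ projects to $\mathbb{P}$, to $\mathbb{R}\restr\alpha\ast L(\alpha)$, and to $\mathbb{P}\restr\alpha\ast\Add(\tau^+)^{W[\mathbb{P}\restr\alpha]}$, which is useful background but is not a statement that $W[X_\beta]$ ``decodes'' the names $q_i(\alpha)$ in the way your patch needs.
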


\begin{proof} See \cite[Lemma 3.24 and 3.25]{CummingsForeman}. \end{proof}


Cummings and Foreman \cite{CummingsForeman} also proved the following lemma. 

\begin{lemma}\label{intermidium} For every $n<\omega,$ let $X\in V[\mathbb{R}_{\omega}]$ be a $\kappa_n$-sequence of ordinals, then 
$X\in V[\mathbb{R}_{n+2}\ast \dot{\mathbb{P}}_{n+2}]$ 
\end{lemma}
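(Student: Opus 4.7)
The plan is to reduce the statement to clause (10) of Lemma \ref{fondamentale} applied at index $n+2$, which tells us that every $\aleph_{n+2}$-sequence of ordinals belonging to $V[\mathbb{R}_{n+2}\ast \dot{\mathbb{Q}}_{n+2}]=V[\mathbb{R}_{n+3}]$ already lies in $V[\mathbb{R}_{n+2}\ast \dot{\mathbb{P}}_{n+2}]$. By clause (1) of Lemma \ref{fondamentale}, $\kappa_n=\aleph_{n+2}$ in $V[\mathbb{R}_k]$ for every $k\geq n+1$, and this value is not disturbed by the remaining stages, so the given $\kappa_n$-sequence $X$ is in fact an $\aleph_{n+2}$-sequence of ordinals. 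It therefore suffices to show that $X$ already lies in $V[\mathbb{R}_{n+3}]$.

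To this end, I would factor $\mathbb{R}_\omega\equiv \mathbb{R}_{n+3}\ast \dot{T}$ and verify that the tail $T$ adds no $\aleph_{n+2}$-sequence of ordinals over $V[\mathbb{R}_{n+3}]$. Decompose $T=\mathbb{Q}_{n+3}\ast \dot{T}'$, where $T'$ is the inverse limit at $\omega$ of $\mathbb{Q}_{n+4}\ast \dot{\mathbb{Q}}_{n+5}\ast\dots$. By clause (2) of Lemma \ref{fondamentale} the first factor $\mathbb{Q}_{n+3}$ is $<\aleph_{n+3}$-distributive over $V[\mathbb{R}_{n+3}]$, so it adds no $\aleph_{n+2}$-sequence of ordinals. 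Over $V[\mathbb{R}_{n+4}]$, every $\mathbb{Q}_m$ with $m\geq n+4$ is $\aleph_{m-1}$-directed closed, in particular $\aleph_{n+3}$-directed closed. A coordinatewise construction of lower bounds then shows that the $\omega$-length inverse limit $T'$ is itself $\aleph_{n+3}$-closed, hence $<\aleph_{n+3}$-distributive, and thus adds no $\aleph_{n+2}$-sequence of ordinals. Composing the two stages, $T$ adds no $\aleph_{n+2}$-sequence of ordinals, and therefore $X\in V[\mathbb{R}_{n+3}]$.

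Applying clause (10) of Lemma \ref{fondamentale} at index $n+2$ then places $X$ in $V[\mathbb{R}_{n+2}\ast \dot{\mathbb{P}}_{n+2}]$, which is the desired conclusion. The only technical care needed is in the coordinatewise construction of lower bounds for $T'$: for a decreasing sequence of length less than $\aleph_{n+3}$, one builds a lower bound in each factor $\mathbb{Q}_m$ separately, as a name in the appropriate intermediate model, using the forced directed closure, and then verifies that the resulting coherent sequence is a condition in the inverse limit. This is routine $\omega$-length inverse-limit bookkeeping, but it is the step where a fully rigorous proof has to spell out the details.
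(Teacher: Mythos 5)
Your proposal is correct and takes essentially the same route as the paper's proof: both first pull the $\kappa_n$-sequence down to $V[\mathbb{R}_{n+3}]$ using the closure of the tail iteration together with the distributivity of $\mathbb{Q}_{n+3}$, and then finish by invoking clause $(10)$ of Lemma \ref{fondamentale} at index $n+2$. The only cosmetic difference is that the paper simply cites the fact that $\mathbb{R}_\omega/K_{m+3}$ is $\kappa_m$-closed (a Cummings--Foreman lemma), whereas you sketch the coordinatewise inverse-limit argument for it; that step is indeed routine and your outline of it is sound.
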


\begin{proof} For every $m<\omega,$ if $K_{m+3}$ is any generic filter for $\mathbb{R}_{m+3}$ over $V,$ then $\mathbb{R}_{\omega}/ K_{m+3}$ is $\kappa_m$-closed. Therefore, $X\in V[\mathbb{R}_{n+4}].$ 
Since $\mathbb{Q}_{n+3}$ is $<\kappa_{n+1}$-distributive in $V[\mathbb{R}_{n+4}],$ we have $X\in V[\mathbb{R}_{n+3}].$ Finally, every $\kappa_n$-sequence of ordinals in 
$V[\mathbb{R}_{n+3}]$ is in $V[\mathbb{R}_{n+2}\ast \dot{\mathbb{P}}_{n+2}],$ that completes the proof. \end{proof}


In \cite{CummingsForeman}, this was used to prove that if $T$ is a $\kappa_n$-tree in $V[\mathbb{R}_{\omega}],$ then $T\in V[\mathbb{R}_{n+2}\ast \dot{\mathbb{P}}_{n+2}].$ 
The same property does not hold for $(\kappa_n,\mu)$-trees unless $\mu^{<\kappa_n}$ is large enough.

\section{Expanding Cummings and Foreman's Model}\label{subsec}

To prove the main theorem, we need to expand Cummings and Foreman's model. Recall that $G_{\omega}$ is a generic filter for $\mathbb{R}_{\omega}$ over $V.$ We defined 
$\mathbb{P}_n:= \Add(\aleph_n, \kappa_n)^{V[\mathbb{R}_{n-1}]}$ and $\mathbb{U}_n:= \{(0,q,f);\ (0,q,f)\in \mathbb{Q}_n \}$ is ordered as a subset of $\mathbb{Q}_n.$ For every $n<\omega,$ if $K_n\ast G_n$ is any generic filter for $\mathbb{R}_n\ast \mathbb{Q}_n$ over $V,$ 
then $\mathbb{S}_n$ is a forcing notion in $V[K_n\ast G_n]$ and it denotes $(\mathbb{P}_n\times \mathbb{U}_n)/ G_n$ (see Lemma \ref{fondamentale}). In this section we observe what happens when we force over $V[G_{\omega}]$ with $\mathbb{S}_{n+1}$ and then with $\mathbb{S}_{n+2}.$

\begin{definition} For every $n<\omega,$ let $K_{n+1}$ be any generic filter for $\mathbb{R}_{n+1}$ over $V.$ We define in $V[K_{n+1}]$ the forcing 
$$\tail_{n+1}:= \mathbb{R}_{\omega}/ K_{n+1}.$$ 
\end{definition}

\begin{remark}\label{diritti}
By Lemma \ref{fondamentale}, $\tail_{n+3}$ is a 
$\kappa_n$-directed closed forcing in $V[\mathbb{R}_{n+3}].$ Since $\mathbb{S}_{n+2}$ is $\kappa_n$-closed,  
the poset $\tail_{n+3}$ is $\kappa_n$-directed closed even in $V[\mathbb{R}_{n+3}\ast \dot{\mathbb{S}}_{n+2}]= V[\mathbb{R}_{n+2}\ast (\dot{\mathbb{P}}_{n+2}\times \dot{\mathbb{U}}_{n+2})].$
\end{remark}

Recall that $G_{\omega}\subseteq \mathbb{R}_{\omega}$ is the generic filter over $V$ fixed in Definition \ref{CFiteration}. 

\begin{notation} From now on, for every $n<\omega,$ we denote by $G_0\ast ... \ast G_n$ the $\mathbb{R}_{n+1}$-generic filter over $V$ derived from $G_{\omega}.$ We also let $V_n:=V[G_0\ast ...\ast G_n].$
\end{notation}


\begin{lemma} $\mathbb{S}_{n+1}$ is $\kappa_{n+1}$-c.c. in $V[\mathbb{R}_{\omega}].$
\end{lemma}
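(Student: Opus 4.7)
The plan is to propagate the $\kappa_{n+1}$-c.c.\ of $\mathbb{S}_{n+1}$, which already holds in $V[\mathbb{R}_{n+2}]$ by clause~$7$ of Lemma~\ref{fondamentale}, forward through the rest of $\mathbb{R}_\omega$ by repeatedly applying Easton's Lemma to the projection $\mathbb{P}_m\times\mathbb{U}_m\to\mathbb{Q}_m$ given by clause~$4$.

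The first lift, from $V[\mathbb{R}_{n+2}]$ to $V[\mathbb{R}_{n+3}]$, is the delicate one: $\mathbb{Q}_{n+2}$ is only $\aleph_{n+1}$-directed closed, far below $\kappa_{n+1}$, so Easton's Lemma cannot be applied to it directly. Instead, one works inside the larger model $V[\mathbb{R}_{n+2}\ast(\mathbb{P}_{n+2}\times\mathbb{U}_{n+2})]$, which contains $V[\mathbb{R}_{n+3}]$. In $V[\mathbb{R}_{n+2}]$, clause~$6$ gives that $\mathbb{U}_{n+2}$ is $\aleph_{n+3}=\kappa_{n+1}$-directed closed, while clause~$9$ applied at stage $n+1$ gives that $\mathbb{P}_{n+2}=\Add(\aleph_{n+2},\kappa_{n+2})^{V[\mathbb{R}_{n+1}]}$ is $\kappa_{n+1}$-Knaster. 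Since the product of a $\kappa_{n+1}$-c.c.\ forcing with a $\kappa_{n+1}$-Knaster forcing is $\kappa_{n+1}$-c.c., the product $\mathbb{S}_{n+1}\times\mathbb{P}_{n+2}$ has the $\kappa_{n+1}$-c.c.\ in $V[\mathbb{R}_{n+2}]$, and Easton's Lemma applied with the closed forcing $\mathbb{U}_{n+2}$ preserves this property through $\mathbb{U}_{n+2}$. The standard fact that the $\kappa$-c.c.\ of a product passes to each factor in the corresponding generic extension then yields that $\mathbb{S}_{n+1}$ is $\kappa_{n+1}$-c.c.\ in $V[\mathbb{R}_{n+2}\ast(\mathbb{P}_{n+2}\times\mathbb{U}_{n+2})]$, hence in its inner model $V[\mathbb{R}_{n+3}]$.

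From $V[\mathbb{R}_{n+3}]$ onward the analogous argument becomes easier, because both factors of the projection are $\kappa_{n+1}$-closed. Indeed $\mathbb{P}_{n+3}=\Add(\aleph_{n+3},\kappa_{n+3})^{V[\mathbb{R}_{n+2}]}$ is $\aleph_{n+3}=\kappa_{n+1}$-directed closed, and $\mathbb{U}_{n+3}$ is $\aleph_{n+4}=\kappa_{n+2}$-directed closed by clause~$6$, so their product is $\kappa_{n+1}$-directed closed and Easton's Lemma immediately yields the $\kappa_{n+1}$-c.c.\ of $\mathbb{S}_{n+1}$ in $V[\mathbb{R}_{n+4}]$. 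Finally, $\tail_{n+4}=\mathbb{R}_\omega/\mathbb{R}_{n+4}$ is the inverse limit of $\langle \mathbb{Q}_m;\ m\geq n+4\rangle$; by clause~$2$ each $\mathbb{Q}_m$ is $\aleph_{m-1}$-directed closed, and $\aleph_{m-1}\geq \aleph_{n+3}=\kappa_{n+1}$ for every $m\geq n+4$, so $\tail_{n+4}$ itself is $\kappa_{n+1}$-directed closed in $V[\mathbb{R}_{n+4}]$, and one last application of Easton's Lemma gives the $\kappa_{n+1}$-c.c.\ of $\mathbb{S}_{n+1}$ in $V[\mathbb{R}_\omega]$. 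The main obstacle is the first lift, where one must genuinely exploit both the projection of $\mathbb{Q}_{n+2}$ from $\mathbb{P}_{n+2}\times\mathbb{U}_{n+2}$ and the non-trivial Knaster property of $\mathbb{P}_{n+2}$ recorded in clause~$9$.
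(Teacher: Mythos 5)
Your strategy is genuinely different from the paper's: you propagate the $\kappa_{n+1}$-c.c.\ \emph{upward} from $V[\mathbb{R}_{n+2}]$ to $V[\mathbb{R}_\omega]$ via Easton-style preservation, whereas the paper argues \emph{downward} by contradiction, using Lemma \ref{intermidium} together with chain-condition covering to push a hypothetical $\kappa_{n+1}$-sized maximal antichain from $V[\mathbb{R}_\omega]$ back into a small enough model where the c.c.\ is already known. Your first lift, using the $\kappa_{n+1}$-Knaster property of $\mathbb{P}_{n+2}$ together with the $\kappa_{n+1}$-closure of $\mathbb{U}_{n+2}$ inside the larger model $V[\mathbb{R}_{n+2}\ast(\dot{\mathbb{P}}_{n+2}\times\dot{\mathbb{U}}_{n+2})]$, is correct and nicely done.

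However, there is a genuine gap at the second lift. You assert that $\mathbb{P}_{n+3}=\Add(\aleph_{n+3},\kappa_{n+3})^{V[\mathbb{R}_{n+2}]}$ is $\aleph_{n+3}$-directed closed \emph{in $V[\mathbb{R}_{n+3}]$}, and that the situation is ``easier'' than the first lift. That is not so. Clause $9$ of Lemma \ref{fondamentale}, applied at stage $n+2$, records only that $\mathbb{P}_{n+3}$ is ${<}\aleph_{n+3}$-distributive and $\kappa_{n+2}$-Knaster in $V[\mathbb{R}_{n+3}]$ --- not closed. In fact the closure genuinely fails: $\mathbb{Q}_{n+2}$ projects onto $\mathbb{P}_{n+2}=\Add(\aleph_{n+2},\kappa_{n+2})^{V[\mathbb{R}_{n+1}]}$ and therefore adds a new subset $g$ of $\aleph_{n+2}$ over $V[\mathbb{R}_{n+2}]$; since $\mathbb{P}_{n+2}$ is ${<}\aleph_{n+2}$-distributive, each proper initial segment $g\restr\alpha$ lies in $V[\mathbb{R}_{n+2}]$ and hence is a condition of $\mathbb{P}_{n+3}$, but the strictly decreasing $\aleph_{n+2}$-chain $\langle g\restr\alpha;\ \alpha<\aleph_{n+2}\rangle$ has no lower bound in $\mathbb{P}_{n+3}$, as such a bound would put $g$ into $V[\mathbb{R}_{n+2}]$. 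So $\mathbb{P}_{n+3}$ is not even $\aleph_{n+2}^+$-closed in $V[\mathbb{R}_{n+3}]$, and the application of Easton's Lemma at this step is unjustified. The same obstacle that made the first lift delicate --- the Cohen component is defined in an earlier model and loses closure once one forces through the intervening $\mathbb{Q}$'s --- recurs at every subsequent $\mathbb{P}_m$ and at the tail, so the rest of the forward induction inherits the problem. To repair it you would need either the Knaster device at each step (but $\mathbb{P}_{n+3}$ is only $\kappa_{n+2}$-Knaster, not $\kappa_{n+1}$-Knaster, so the Knaster-times-c.c.\ product trick only gives $\kappa_{n+2}$-c.c., which is too weak), or a more elaborate term-forcing decomposition; the paper sidesteps all of this by working downward with Lemma \ref{intermidium}.
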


\begin{proof} Suppose, towards a contradiction, that there is a maximal antichain $A\in V[\mathbb{R}_{\omega}]$ for $\mathbb{S}_{n+1}$ which has size $\kappa_{n+1}$ in 
$V[\mathbb{R}_{\omega}].$ By Lemma \ref{intermidium} $A\in [\mathbb{R}_{n+3}\ast \mathbb{P}_{n+3}].$ The poset $\mathbb{P}_{n+3}$ is $\kappa_{n+2}$-c.c. in $V[\mathbb{R}_{n+3}],$ so $A$ is covered by some antichain $A'$ of size $\leq \kappa_{n+1}$ in $V[\mathbb{R}_{n+3}].$ By maximality of $A,$ we have $A= A',$ so $A\in V[\mathbb{R}_{n+3}].$ Now, $\mathbb{Q}_{n+2}$ is $\kappa_{n+2}$-c.c. so, by the same argument, we have $A\in V[\mathbb{R}_{n+1}],$ but $\mathbb{S}_{n+1}$ is $\kappa_{n+1}$-c.c. in that model, that leads to a contradiction. \end{proof}

\begin{lemma} Every maximal antichain of $\mathbb{S}_{n+1}$ in $V[\mathbb{R}_{\omega}],$ belongs to $V[\mathbb{R}_{n+2}].$ 
\end{lemma}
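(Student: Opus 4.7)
The plan is to mirror the proof of the preceding lemma, one iteration step lower. Let $A\in V[\mathbb{R}_{\omega}]$ be a maximal antichain of $\mathbb{S}_{n+1}$. By the preceding lemma, $\mathbb{S}_{n+1}$ is $\kappa_{n+1}$-c.c. in $V[\mathbb{R}_{\omega}]$, so $|A|<\kappa_{n+1}$ and $A$ can therefore be coded as a $\kappa_n$-sequence of ordinals there. Lemma \ref{intermidium} then places this sequence, hence $A$ itself, in $V[\mathbb{R}_{n+2}\ast \dot{\mathbb{P}}_{n+2}]$; in that intermediate model $A$ remains a maximal antichain of $\mathbb{S}_{n+1}$, since $\mathbb{S}_{n+1}$ already lives in $V[\mathbb{R}_{n+2}]$ and incompatibility of conditions is absolute between the two models.

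The remaining task is to eliminate the $\mathbb{P}_{n+2}$ factor. By item $9$ of Lemma \ref{fondamentale}, applied with $n+1$ in place of $n$, the poset $\mathbb{P}_{n+2}=\Add(\aleph_{n+2},\kappa_{n+2})^{V[\mathbb{R}_{n+1}]}$ is $\kappa_{n+1}$-Knaster, and in particular $\kappa_{n+1}$-c.c., in $V[\mathbb{R}_{n+2}]$. Mimicking the cover-and-maximality step from the preceding lemma, one uses this chain condition to produce in $V[\mathbb{R}_{n+2}]$ an antichain $A'$ of $\mathbb{S}_{n+1}$ with $A\subseteq A'$; since $\mathbb{S}_{n+1}$ is itself $\kappa_{n+1}$-c.c. in $V[\mathbb{R}_{n+2}]$, we have automatically $|A'|<\kappa_{n+1}$. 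Maximality of $A$ in $V[\mathbb{R}_{\omega}]$, combined with the fact that $A'\supseteq A$ is itself an antichain, then forces $A'=A$, whence $A\in V[\mathbb{R}_{n+2}]$.

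The main obstacle is the construction of the antichain cover $A'$: one must argue that an antichain $A\in V[\mathbb{R}_{n+2}\ast \dot{\mathbb{P}}_{n+2}]$ all of whose elements already belong to $V[\mathbb{R}_{n+2}]$ is contained in some antichain lying already in $V[\mathbb{R}_{n+2}]$. This is the technical heart shared with the preceding lemma, and it rests on the interaction between the $\kappa_{n+1}$-c.c. of $\mathbb{P}_{n+2}$ and the $\kappa_{n+1}$-c.c. of $\mathbb{S}_{n+1}$ in $V[\mathbb{R}_{n+2}]$; the argument is exactly that of the preceding lemma, with the indices shifted down by one.
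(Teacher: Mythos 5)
Your proof is correct and matches the paper's argument step for step: size bound from the $\kappa_{n+1}$-c.c.\ of $\mathbb{S}_{n+1}$, Lemma \ref{intermidium} to land in $V[\mathbb{R}_{n+2}\ast\dot{\mathbb{P}}_{n+2}]$, the $\kappa_{n+1}$-c.c.\ (indeed Knaster) property of $\mathbb{P}_{n+2}$ to cover $A$ by an antichain $A'\in V[\mathbb{R}_{n+2}]$, and maximality to conclude $A'=A$. You correctly identify the cover-by-an-antichain step as the technical heart; the paper asserts it with the same level of detail and it is indeed the same argument as in the preceding lemma with indices shifted.
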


\begin{proof} Let $A$ be a maximal antichain of $\mathbb{S}_{n+1}$ in $V[\mathbb{R}_{\omega}],$ then $A$ has size $\leq \kappa_n.$ By Lemma \ref{intermidium}, then 
$A\in V[\mathbb{R}_{n+2}\ast \dot{\mathbb{P}}_{n+2}].$ Since $\mathbb{P}_{n+2}$ is $\kappa_{n+1}$-c.c., $A$ is covered by an antichain $A'$ of size $\leq \kappa_n$ which is in 
$V[\mathbb{R}_{n+2}];$ by the maximality of $A,$ we have $A= A'.$ \end{proof}

By the previous lemma, when we force with $\mathbb{S}_{n+1}$ over $V[G_{\omega}],$ we can look at the resulting extension as being obtained from $V_{n+1}$ by forcing first with $\mathbb{S}_{n+1}$ and then with $\tail_{n+2}.$ That justifies the following definition. 

\begin{definition}\label{20feb} We denote by $$V_n[g_{n+1}\times u_{n+1}][G_{tail2}]$$
the generic extension obtained by forcing with $\mathbb{S}_{n+1}$ over $V[G_{\omega}],$ where\linebreak 
$g_{n+1}\times u_{n+1}\subseteq \mathbb{P}_{n+1}\times \mathbb{U}_{n+1}$ is generic over $V_n$ and 
$G_{tail2}\subseteq \tail_{n+2}$ is generic over $V_n[g_{n+1}\times u_{n+1}].$  
\end{definition}

\begin{lemma} Every maximal antichain of $\mathbb{S}_{n+2}$ in $V[\mathbb{R}_{\omega}\ast \dot{\mathbb{S}}_{n+1}],$ belongs to 
$V[\mathbb{R}_{n+1}\ast (\dot{\mathbb{P}}_{n+1}\times \dot{\mathbb{U}}_{n+1})\ast \dot{\mathbb{Q}}_{n+2}].$
\end{lemma}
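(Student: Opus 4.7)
The proof follows the template of the preceding lemma. Let $A$ be a maximal antichain of $\mathbb{S}_{n+2}$ in $V[\mathbb{R}_{\omega}\ast \dot{\mathbb{S}}_{n+1}]$.

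The plan is first to bound $|A|$ by showing that $\mathbb{S}_{n+2}$ remains $\kappa_{n+2}$-c.c.\ in $V[\mathbb{R}_{\omega}\ast \dot{\mathbb{S}}_{n+1}]$, so that $|A|\leq \kappa_{n+1}$. The argument mirrors the proof of the preceding lemma: a hypothetical antichain of size $\kappa_{n+2}$ can be pushed down through $\mathbb{P}_{n+4}$ and $\mathbb{Q}_{n+3}$ via Lemma \ref{intermidium} and their chain conditions, until it lands in $V_{n+2}$, where the $\kappa_{n+2}$-c.c.\ of $\mathbb{S}_{n+2}$ yields a contradiction.

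Next I would prove an analogue of Lemma \ref{intermidium} in the presence of $\mathbb{S}_{n+1}$, namely that every $\kappa_{n+1}$-sequence of ordinals in $V[\mathbb{R}_{\omega}\ast \dot{\mathbb{S}}_{n+1}]$ lies in $V[\mathbb{R}_{n+3}\ast \dot{\mathbb{P}}_{n+3}\ast \dot{\mathbb{S}}_{n+1}]$. The idea is to commute $\mathbb{S}_{n+1}$ past the tail forcings (using that $\mathbb{S}_{n+1}$ is $\kappa_{n+1}$-c.c.\ and the generic for the tail is generic over $V[\mathbb{R}_{\omega}]$, so over any intermediate model containing $\mathbb{S}_{n+1}$); then use that $\tail_{n+5}$ is $\kappa_{n+2}$-directed closed in $V[\mathbb{R}_{n+5}]$, whence $<\kappa_{n+2}$-distributive over $V[\mathbb{R}_{n+5}\ast \dot{\mathbb{S}}_{n+1}]$ by Easton's lemma, absorbing the sequence into $V[\mathbb{R}_{n+5}\ast \dot{\mathbb{S}}_{n+1}]$; finally apply an Easton argument for $\mathbb{Q}_{n+4}$ together with claim 10 of Lemma \ref{fondamentale} (shifted to index $n+3$) to descend into $V[\mathbb{R}_{n+3}\ast \dot{\mathbb{P}}_{n+3}\ast \dot{\mathbb{S}}_{n+1}]$. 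Applied to $A$, this gives $A\in V[\mathbb{R}_{n+3}\ast \dot{\mathbb{P}}_{n+3}\ast \dot{\mathbb{S}}_{n+1}]$.

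Finally, $\mathbb{P}_{n+3}$ is $\kappa_{n+2}$-c.c.\ in $V[\mathbb{R}_{n+3}]$ and, by Easton's lemma, remains $\kappa_{n+2}$-c.c.\ in $V[\mathbb{R}_{n+3}\ast \dot{\mathbb{S}}_{n+1}]$. Hence $A$ is covered by an antichain $A'\subseteq \mathbb{S}_{n+2}$ of size $\leq \kappa_{n+1}$ living in $V[\mathbb{R}_{n+3}\ast \dot{\mathbb{S}}_{n+1}]$; the maximality of $A$ in the outer model forces $A=A'$, so $A\in V[\mathbb{R}_{n+3}\ast \dot{\mathbb{S}}_{n+1}]$. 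Since the $\mathbb{S}_{n+1}$-generic is generic over $V[\mathbb{R}_{\omega}]\supseteq V[\mathbb{R}_{n+3}]$, it is mutually generic with the $\mathbb{Q}_{n+2}$-generic over $V[\mathbb{R}_{n+2}]$, and therefore
\[
V[\mathbb{R}_{n+3}\ast \dot{\mathbb{S}}_{n+1}] = V[\mathbb{R}_{n+2}\ast \dot{\mathbb{S}}_{n+1}\ast \dot{\mathbb{Q}}_{n+2}] = V[\mathbb{R}_{n+1}\ast (\dot{\mathbb{P}}_{n+1}\times \dot{\mathbb{U}}_{n+1})\ast \dot{\mathbb{Q}}_{n+2}],
\]
which is the desired conclusion. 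The main obstacle is the intermediate step: carefully calibrating the interaction between the $\kappa_{n+1}$-c.c.\ of $\mathbb{S}_{n+1}$ and the closure/distributivity of the successive tail factors so that Easton's lemma and claim 10 of Lemma \ref{fondamentale} together suffice to absorb a $\kappa_{n+1}$-sequence down to $V[\mathbb{R}_{n+3}\ast \dot{\mathbb{P}}_{n+3}\ast \dot{\mathbb{S}}_{n+1}]$.
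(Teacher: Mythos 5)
Your route diverges from the paper's, and in doing so it introduces two concrete gaps that the paper's argument is designed to avoid.

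The paper's proof never needs a modified version of Lemma \ref{intermidium}. It first observes that $\mathbb{S}_{n+2}$ stays $\kappa_{n+2}$-c.c.\ in $V[\mathbb{R}_{\omega}\ast\dot{\mathbb{S}}_{n+1}]$ because $\mathbb{S}_{n+1}$ is itself $\kappa_{n+2}$-c.c.\ in $V[\mathbb{R}_{\omega}]$ (indeed $|\mathbb{S}_{n+1}|\leq\kappa_{n+1}<\kappa_{n+2}$, so the product $\mathbb{S}_{n+1}\times\mathbb{S}_{n+2}$ is $\kappa_{n+2}$-c.c.), giving $|A|\leq\kappa_{n+1}$. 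It then argues that $A$ already lies in $V[\mathbb{R}_{\omega}]$; \emph{only then} does it invoke the existing Lemma \ref{intermidium} (for $\kappa_{n+1}$-sequences in $V[\mathbb{R}_{\omega}]$) to get $A\in V[\mathbb{R}_{n+3}\ast\dot{\mathbb{P}}_{n+3}]$, and the $\kappa_{n+2}$-c.c.\ of $\mathbb{P}_{n+3}$ to push $A$ into $V[\mathbb{R}_{n+3}]\subseteq V[\mathbb{R}_{n+1}\ast(\dot{\mathbb{P}}_{n+1}\times\dot{\mathbb{U}}_{n+1})\ast\dot{\mathbb{Q}}_{n+2}]$.

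Your proof, by contrast, carries $\mathbb{S}_{n+1}$ along the entire descent, and this creates two problems. First, in your opening step you justify the $\kappa_{n+2}$-c.c.\ of $\mathbb{S}_{n+2}$ in $V[\mathbb{R}_{\omega}\ast\dot{\mathbb{S}}_{n+1}]$ by ``mirroring the preceding lemma'' via Lemma \ref{intermidium} and chain conditions; but Lemma \ref{intermidium} only controls sequences in $V[\mathbb{R}_{\omega}]$, whereas a hypothetical $\kappa_{n+2}$-sized antichain now lives in the bigger model $V[\mathbb{R}_{\omega}\ast\dot{\mathbb{S}}_{n+1}]$, so that argument does not get off the ground. (The product argument above does; you should use it instead.) Second, and more seriously, the proposed analogue of Lemma \ref{intermidium} --- absorbing a $\kappa_{n+1}$-sequence of ordinals from $V[\mathbb{R}_{\omega}\ast\dot{\mathbb{S}}_{n+1}]$ into $V[\mathbb{R}_{n+3}\ast\dot{\mathbb{P}}_{n+3}\ast\dot{\mathbb{S}}_{n+1}]$ --- is not justified by ``an Easton argument for $\mathbb{Q}_{n+4}$ together with claim 10.'' The obstruction is that $\mathbb{Q}_{n+4}$ is $<\kappa_{n+2}$-\emph{distributive} but only $\kappa_{n+1}$-\emph{closed} in $V[\mathbb{R}_{n+4}]$; Easton's Lemma needs closure on one side, so pairing it against the $\kappa_{n+1}$-c.c.\ $\mathbb{S}_{n+1}$ only yields $<\kappa_{n+1}$-distributivity of $\mathbb{Q}_{n+4}$ over $V[\mathbb{R}_{n+4}\ast\dot{\mathbb{S}}_{n+1}]$, which is strictly insufficient to absorb a $\kappa_{n+1}$-sequence. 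The same difficulty recurs for $\mathbb{Q}_{n+3}$. One could try to repair this by going through the $\mathbb{P}_m\times\mathbb{U}_m$ decomposition at each stage (the $\mathbb{U}_m$'s have more closure), but then you would essentially be redoing the Cummings--Foreman factoring analysis in the presence of $\mathbb{S}_{n+1}$, which is precisely the work the paper's simpler route avoids. As written, the key intermediate step of your proof is a genuine gap.
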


\begin{proof} $\mathbb{S}_{n+2}$ is $\kappa_{n+2}$-c.c. in $V[\mathbb{R}_{\omega}].$ Since $\mathbb{S}_{n+1}$ is $\kappa_{n+2}$-c.c. in $V[\mathbb{R}_{\omega}],$ the poset $\mathbb{S}_{n+2}$ is $\kappa_{n+2}$-c.c even in $V[\mathbb{R}_{\omega}\ast \dot{\mathbb{S}}_{n+1}].$ Let $A$ be a maximal antichain of $\mathbb{S}_{n+2}$ in 
$V[\mathbb{R}_{\omega}\ast \dot{\mathbb{S}}_{n+1}],$ then $A$ has size $\leq \kappa_{n+1}.$ Since $\mathbb{S}_{n+2}$ is 
$\kappa_{n+2}$-c.c in $V[\mathbb{R}_{\omega}],$ we have $A\in V[\mathbb{R}_{\omega}].$ By Lemma \ref{intermidium}, then, $A\in V[\mathbb{R}_{n+3}\ast \dot{\mathbb{P}}_{n+3}].$ Moreover, $\mathbb{P}_{n+3}$ is $\kappa_{n+2}$-c.c., so $A$ belongs to $V[\mathbb{R}_{n+3}].$ In particular $A$ is in $V[\mathbb{R}_{n+1}\ast (\dot{\mathbb{P}}_{n+1}\times \dot{\mathbb{U}}_{n+1})\ast \dot{\mathbb{Q}}_{n+2}].$ \end{proof}

By the previous lemma, if we force with $\mathbb{S}_{n+2}$ over $V_n[g_{n+1}\times u_{n+1}][G_{tail_2}],$ then the resulting extension can be viewed as obtained 
by forcing first with $\mathbb{S}_{n+2}$ over $V_n[g_{n+1}\times u_{n+1}][G_{n+2}]$ and then with $\tail_{n+3}.$ That justifies the following definition. 

\begin{definition}\label{20feb2} We let $$V_n[g_{n+1}\times u_{n+1}][g_{n+2}\times u_{n+2}][G_{tail3}]$$ 
be the generic extension obtained by forcing with $\mathbb{S}_{n+2}$ over 
$V_n[g_{n+1}\times u_{n+1}][G_{tail_2}],$ 
where $g_{n+2}\times u_{n+2}\subseteq \mathbb{P}_{n+2}\times \mathbb{U}_{n+2}$ is generic over $V_n[g_{n+1}\times u_{n+1}]$ and $G_{tail_3}\subseteq \tail_{n+3}$ is generic over 
$V_n[g_{n+1}\times u_{n+1}][g_{n+2}\times u_{n+2}].$ 
\end{definition}

\section{The Term Forcing}\label{the term forcing}

In the previous section we defined the model $V_n[g_{n+1}\times u_{n+1}][g_{n+2}\times u_{n+2}][G_{tail3}],$ which is the result of forcing over $V_{n+2}$ with the iteration 
$\tail_{n+3}\ast \mathbb{S}_{n+1}\ast \mathbb{S}_{n+2}.$ Now, we want to show that this model can be seen as being obtained by forcing over $V_n$ with a cartesian product that satisfies particular properties. In order to define that forcing notion, first we need to introduce the notion of ``term forcing" (that notion is due to Mitchell \cite{Mitchell06}).

\begin{definition}\label{term forcing}
Let $\mathbb{P}$ be a forcing notion and let $\dot{\mathbb{Q}}$ be a $\mathbb{P}$-name for a poset, we let 
$$\mathbb{T}:=\{\dot{q};\ \force_{\mathbb{P}} \dot{q}\in \dot{Q} \}$$ 
$\mathbb{T}$ is ordered as follows: $\dot{q}\leq^* \dot{r}$ if, and only if, $\force_{\mathbb{P}} \dot{q}\leq \dot{r}.$ The poset $(\mathbb{T}, \leq^*),$ so defined, is called the 
\emph{$\mathbb{P}$-term-forcing for $\dot{\mathbb{Q}}$}.
\end{definition}

\begin{lemma}\label{lemmatf} In the situation of Definition \ref{term forcing}, the following hold:
\begin{enumerate}
\item $\mathbb{P}\ast \dot{\mathbb{Q}}$ is a projection of $\mathbb{P}\times \mathbb{T};$
\item if $\force_{\mathbb{P}} \dot{\mathbb{Q}} \textrm{ is $\kappa$-directed closed},$ then $\mathbb{T}$ is $\kappa$-directed closed as well.
\end{enumerate}
\end{lemma}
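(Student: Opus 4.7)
For part (1), I plan to show that the map $\pi:\mathbb{P}\times \mathbb{T}\to \mathbb{P}\ast \dot{\mathbb{Q}}$ defined by $\pi(p,\dot{q}):=(p,\dot{q})$ is a projection. Order-preservation and $\pi(1_{\mathbb{P}\times\mathbb{T}})=1_{\mathbb{P}\ast\dot{\mathbb{Q}}}$ are immediate: if $(p',\dot{q}')\leq (p,\dot{q})$ in $\mathbb{P}\times\mathbb{T}$ then $p'\leq p$ and $\force_\mathbb{P}\dot{q}'\leq \dot{q}$, which in particular gives $p'\force \dot{q}'\leq \dot{q}$. The content of the claim is the lifting condition: given $(p',\dot{q}')\leq \pi(p,\dot{q})=(p,\dot{q})$ in $\mathbb{P}\ast \dot{\mathbb{Q}}$, I must produce a pair $(p^*,\dot{r})\leq (p,\dot{q})$ in $\mathbb{P}\times\mathbb{T}$ whose image refines $(p',\dot{q}')$.

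The natural choice is $p^*:=p'$, and I will build $\dot{r}$ by a standard mixing argument. Extend $\{p'\}$ to a maximal antichain $\{p'\}\cup A_0$ of $\mathbb{P}$, where $A_0$ is a maximal antichain among conditions incompatible with $p'$. Define $\dot{r}$ so that $p'\force \dot{r}=\dot{q}'$ and $a\force \dot{r}=\dot{q}$ for every $a\in A_0$. Then $\force_\mathbb{P}\dot{r}\leq \dot{q}$ holds on the nose: below $p'$ this follows from the hypothesis $p'\force \dot{q}'\leq \dot{q}$, and below each $a\in A_0$ it is trivial. Hence $\dot{r}\in\mathbb{T}$, $(p',\dot{r})\leq (p,\dot{q})$ in $\mathbb{P}\times\mathbb{T}$, and $\pi(p',\dot{r})=(p',\dot{r})\leq (p',\dot{q}')$ in $\mathbb{P}\ast\dot{\mathbb{Q}}$ since $p'\force \dot{r}=\dot{q}'$.

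For part (2), the plan is to apply the maximum principle in the $\mathbb{P}$-extension. Let $\{\dot{q}_i;\ i\in I\}$ with $|I|<\kappa$ be a $\leq^*$-directed family in $\mathbb{T}$. For any $i,j\in I$ there is $k\in I$ with $\force_\mathbb{P}\dot{q}_k\leq \dot{q}_i$ and $\force_\mathbb{P}\dot{q}_k\leq \dot{q}_j$, so in any generic extension $V[G]$ the interpretations $\{\dot{q}_i^G;\ i\in I\}$ form a directed subset of $\dot{\mathbb{Q}}^G$ of size less than $\kappa$. By $\kappa$-directed closure of $\dot{\mathbb{Q}}$ in $V[G]$ there is a common lower bound, and since $G$ was arbitrary the maximum principle yields a $\mathbb{P}$-name $\dot{r}$ with $\force_\mathbb{P}(\dot{r}\in \dot{\mathbb{Q}}\ \wedge\ \forall i\in I\ \dot{r}\leq \dot{q}_i)$. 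Then $\dot{r}\in\mathbb{T}$ is a $\leq^*$-lower bound for the family.

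The delicate step will be the mixing construction in part (1): off of $p'$ we have no control over the relation between $\dot{q}'$ and $\dot{q}$, so we cannot simply use $\dot{q}'$ as our witness in $\mathbb{T}$; we must \emph{replace} it by $\dot{q}$ on the complement of $p'$ in order to secure the uniform $\force_\mathbb{P}\dot{r}\leq \dot{q}$ required by membership in $\mathbb{T}$. Part (2) by contrast is essentially formal once one notices that $\leq^*$-directedness, being forced inequality, transfers verbatim to directedness of the interpretations in every generic extension.
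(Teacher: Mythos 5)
Your proof is correct and follows essentially the same strategy as the paper's: a mixing argument for the projection property in part (1) (the paper defines the name $\dot{q}$ by $\dot{q}^G=\dot{q}_0^G$ when $p_0\in G$ and $\dot{q}^G=\dot{q}_1^G$ otherwise, which is exactly your construction with the maximal antichain made implicit), and for part (2) the observation that forced compatibility of the names yields a forced lower bound, hence a $\leq^*$-lower bound in $\mathbb{T}$. One small mismatch worth noting: the paper's stated definition of $\kappa$-directed closed asks for lower bounds of \emph{pairwise compatible} sets of size $<\kappa$, not merely of directed families; your argument goes through verbatim if you start from a pairwise compatible family in $\mathbb{T}$, since pairwise $\leq^*$-compatibility is exactly forced pairwise compatibility of the interpretations.
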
   

\begin{proof}
\item[$(1)$] Let $\pi: \mathbb{P}\times \mathbb{T}\to \mathbb{P}\ast \dot{\mathbb{Q}}$ be the map $(p,\dot{q})\mapsto (p,\dot{q}),$ we prove that $\pi$ is a projection. It is clear that 
$\pi$ respects the ordering relation and $\pi(1_{\mathbb{P}\times \mathbb{T}})= (1_{\mathbb{P}\ast \dot{\mathbb{Q}}}).$ In $\mathbb{P}\ast \dot{\mathbb{Q}},$ let $(p_0, \dot{q}_0 )\leq (p_1, \dot{q}_1),$ then $p_0\leq p_1$ and $p_0\force \dot{q}_0\leq \dot{q}_1.$ Define $\dot{q}$ as a $\mathbb{P}$-name for an element of $\dot{Q}$ such that for every 
$\mathbb{P}$-generic filter $G,$ we have $\dot{q}^G= \dot{q}_0^G$ if $p_0\in G,$ and $\dot{q}^G= \dot{q}_1^G$ otherwise. Then, $[(p_0, \dot{q})]= [(p_0, \dot{q}_0)].$     
\item[$(2)$] Assume that  
$\langle \dot{q}_{\alpha};\ \alpha <\gamma\rangle$ is a sequence of less than $\kappa$ pairwise compatible conditions in $\mathbb{T}.$ Then, 
$$ \force_{\mathbb{P}} ``\langle \dot{q}_{\alpha};\ \alpha<\gamma \rangle \textrm{ are pairwise compatible conditions in $\dot{\mathbb{Q}}",$}$$  hence 
there exists a $\mathbb{P}$-name $\dot{q}$ such that $ \force_{\mathbb{P}} \dot{q}\leq \dot{q}_{\alpha},$ for every $\alpha<\gamma.$ This means that 
$\dot{q}\leq^* \dot{q}_{\alpha},$ 
for every $\alpha<\gamma.$ \end{proof}

\begin{definition}
In $V[\mathbb{R}_{n+2}],$ we define $\mathbb{T}_{n+3}$ as the $(\mathbb{P}_{n+2}\times \mathbb{U}_{n+2})$-term-forcing for $\tail_{n+3}.$ In $V[\mathbb{R}_{n+1}],$ we let 
$\mathbb{T}_{n+2}$ be the $(\mathbb{P}_{n+1}\times \mathbb{U}_{n+1}\times \mathbb{P}_{n+2})$-term-forcing for the poset $\dot{\mathbb{U}}_{n+2}\times \mathbb{T}_{n+3}.$ 
\end{definition}

\begin{lemma} The following hold: 
\begin{enumerate}
\item $\mathbb{T}_{n+3}$ is $\kappa_n$-directed closed in $V[\mathbb{R}_{n+1}\ast (\dot{\mathbb{P}}_{n+1}\times \dot{\mathbb{U}}_{n+1})];$
\item $\mathbb{T}_{n+2}$ is $\kappa_n$-directed closed in $V[\mathbb{R}_{n+1}].$
\end{enumerate}
\end{lemma}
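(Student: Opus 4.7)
\emph{Plan (Part (1)).} Following the template of Lemma \ref{lemmatf}(2), I would first work inside $V[\mathbb{R}_{n+2}]$ with ground forcing $\mathbb{P}_{n+2}\times \mathbb{U}_{n+2}$ and name $\tail_{n+3}$: Remark \ref{diritti} supplies that $\tail_{n+3}$ is forced by this product to be $\kappa_n$-directed closed, and so the lemma yields that $\mathbb{T}_{n+3}$ is $\kappa_n$-directed closed in $V[\mathbb{R}_{n+2}]$. The target model is $V[\mathbb{R}_{n+1}\ast (\dot{\mathbb{P}}_{n+1}\times \dot{\mathbb{U}}_{n+1})]=V[\mathbb{R}_{n+2}][\mathbb{S}_{n+1}]$, and by Lemma \ref{fondamentale}(7) the quotient $\mathbb{S}_{n+1}$ is $<\aleph_{n+2}$-distributive over $V[\mathbb{R}_{n+2}]$. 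Since $\kappa_n=\aleph_{n+2}$, no new directed $<\kappa_n$-families in $\mathbb{T}_{n+3}$ appear in the larger model; every such family is already in $V[\mathbb{R}_{n+2}]$, and its lower bound is inherited.

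\emph{Part (2).} An analogous reduction via Lemma \ref{lemmatf}(2) shows that (2) amounts to verifying that the product $\mathbb{U}_{n+2}\times \mathbb{T}_{n+3}$ is $\kappa_n$-directed closed in
\[
W:=V[\mathbb{R}_{n+1}\ast (\dot{\mathbb{P}}_{n+1}\times \dot{\mathbb{U}}_{n+1}\times \dot{\mathbb{P}}_{n+2})]=V[\mathbb{R}_{n+2}][\mathbb{S}_{n+1}\times \mathbb{P}_{n+2}].
\]
In $V[\mathbb{R}_{n+2}]$ both factors are already $\kappa_n$-directed closed: $\mathbb{U}_{n+2}$ is $\aleph_{n+3}$-directed closed by Lemma \ref{fondamentale}(6), and $\mathbb{T}_{n+3}$ is $\kappa_n$-directed closed by the argument of part (1), so the product is $\kappa_n$-directed closed there as well. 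The remaining task is to transfer this closure to $W$ by showing that $\mathbb{S}_{n+1}\times \mathbb{P}_{n+2}$ adds no $<\aleph_{n+2}$-subsets over $V[\mathbb{R}_{n+2}]$; then every $<\kappa_n$-directed family in $W$ descends to $V[\mathbb{R}_{n+2}]$ and admits a lower bound there.

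\emph{Main obstacle.} The distributivity of the product $\mathbb{S}_{n+1}\times \mathbb{P}_{n+2}$ over $V[\mathbb{R}_{n+2}]$ is the delicate point. Individually, $\mathbb{S}_{n+1}$ is $<\aleph_{n+2}$-distributive by Lemma \ref{fondamentale}(7) and $\mathbb{P}_{n+2}$ is $<\aleph_{n+2}$-distributive by Lemma \ref{fondamentale}(9), but both have chain condition $\kappa_{n+1}=\aleph_{n+3}$, so Easton's lemma cannot combine them into a distributive product at $\aleph_{n+2}$. I will instead exploit that $\mathbb{P}_{n+2}=\Add(\aleph_{n+2},\kappa_{n+2})^{V[\mathbb{R}_{n+1}]}$ is honestly $\aleph_{n+2}$-closed in its defining model $V[\mathbb{R}_{n+1}]$, and, using the covering between $V[\mathbb{R}_{n+1}]$ and $V[\mathbb{R}_{n+2}]$ provided by Lemma \ref{intermidium} together with the $<\aleph_{n+1}$-distributivity of $\mathbb{Q}_{n+1}$, argue that $\mathbb{P}_{n+2}$ remains $<\aleph_{n+2}$-distributive even after forcing with $\mathbb{S}_{n+1}$; chaining this with the distributivity of $\mathbb{S}_{n+1}$ itself then yields the required product distributivity and closes the argument.
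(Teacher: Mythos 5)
Your Part (1) matches the paper's argument exactly: apply Lemma \ref{lemmatf}(2) inside $V[\mathbb{R}_{n+2}]$ using Remark \ref{diritti}, then transfer the $\kappa_n$-directed closure of $\mathbb{T}_{n+3}$ along the $<\kappa_n$-distributive extension by $\mathbb{S}_{n+1}$. Your setup for Part (2) is also correct and agrees with the paper: reduce, via Lemma \ref{lemmatf}(2), to showing that $\dot{\mathbb{U}}_{n+2}\times\mathbb{T}_{n+3}$ is $\kappa_n$-directed closed in $V[\mathbb{R}_{n+1}\ast(\dot{\mathbb{P}}_{n+1}\times\dot{\mathbb{U}}_{n+1}\times\dot{\mathbb{P}}_{n+2})]$, establish this in $V[\mathbb{R}_{n+2}]$ from Part (1) together with Lemma \ref{fondamentale}(6), and then transfer it forward by distributivity.

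The gap is in the transfer step. You correctly observe that the naive Easton argument for the product $\mathbb{S}_{n+1}\times\mathbb{P}_{n+2}$ over $V[\mathbb{R}_{n+2}]$ fails, but the workaround you sketch --- invoking Lemma \ref{intermidium} (which concerns $\kappa_n$-sequences, one level too high) and the $<\aleph_{n+1}$-distributivity of $\mathbb{Q}_{n+1}$ (one level too low) --- does not give the $<\aleph_{n+2}$-distributivity of $\mathbb{P}_{n+2}$ over $V[\mathbb{R}_{n+2}][\mathbb{S}_{n+1}]$, and as written it is not clear it could be made to. The clean route, which is what the paper uses, is to step back to $V[\mathbb{R}_{n+1}]$ rather than $V[\mathbb{R}_{n+2}]$: there $\mathbb{P}_{n+1}$ is $\kappa_n$-c.c.\ while both $\mathbb{U}_{n+1}$ and $\mathbb{P}_{n+2}$ are $\kappa_n$-closed (recall $\kappa_n=\aleph_{n+2}$ in $V[\mathbb{R}_{n+1}]$), so Easton's Lemma item (5) gives directly that $\mathbb{P}_{n+2}$ is $<\kappa_n$-distributive in $V[\mathbb{R}_{n+1}\ast(\dot{\mathbb{P}}_{n+1}\times\dot{\mathbb{U}}_{n+1})]$. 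Combined with the $<\kappa_n$-distributivity of $\mathbb{S}_{n+1}$, this yields the two transfers the paper makes ($V[\mathbb{R}_{n+2}]\to V[\mathbb{R}_{n+2}][\mathbb{S}_{n+1}]\to W$) without ever needing the product $\mathbb{S}_{n+1}\times\mathbb{P}_{n+2}$ to be distributive as a product in a single shot.
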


\begin{proof}

\item[($1$)] We already observed (see Remark \ref{diritti}) that $\tail_{n+3}$ is $\kappa_n$-directed closed in $V[\mathbb{R}_{n+2}\ast (\dot{\mathbb{P}}_{n+2}\times \dot{\mathbb{U}}_{n+2})].$ By Lemma \ref{lemmatf}, then,  
$\mathbb{T}_{n+3}$ is $\kappa_n$-directed closed in 
$V[\mathbb{R}_{n+2}].$ Finally, $\mathbb{S}_{n+1}$ is $<\kappa_n$-distributive, so $\mathbb{T}_{n+3}$ is $\kappa_n$-directed closed in 
$V[\mathbb{R}_{n+1}\ast (\mathbb{P}_{n+1}\times \mathbb{U}_{n+1})];$

\item[($2$)] By the previous claim, the product $\dot{\mathbb{U}}_{n+2}\times \mathbb{T}_{n+3}$ is $\kappa_n$-directed closed in $V[\mathbb{R}_{n+2}].$ 
The poset $\mathbb{S}_{n+1}$ is $<\kappa_n$-distributive in $V[\mathbb{R}_{n+2}],$ so
$\dot{\mathbb{U}}_{n+2}\times \mathbb{T}_{n+3}$ is $\kappa_n$-directed closed in $V[\mathbb{R}_{n+1}\ast (\dot{\mathbb{P}}_{n+1}\times \dot{\mathbb{U}}_{n+1})]$ as well. 
Now, $\mathbb{P}_{n+2}$ is $<\kappa_n$-distributive in $V[\mathbb{R}_{n+1}\ast (\dot{\mathbb{P}}_{n+1}\times \dot{\mathbb{U}}_{n+1})],$ 
so $\dot{\mathbb{U}}_{n+2}\times \mathbb{T}_{n+3}$ is $\kappa_n$-directed closed even in 
$V[\mathbb{R}_{n+1}\ast (\dot{\mathbb{P}}_{n+1}\times \dot{\mathbb{U}}_{n+1})\ast \dot{\mathbb{P}}_{n+2}]=
V[\mathbb{R}_{n+1}\ast (\dot{\mathbb{P}}_{n+1}\times \dot{\mathbb{U}}_{n+1}\times \dot{\mathbb{P}}_{n+2})].$ By Lemma \ref{lemmatf}, then, $\mathbb{T}_{n+2}$ is $\kappa_n$-directed closed in 
$V[\mathbb{R}_{n+1}].$
\end{proof}

 \begin{definition}\label{20feb3} 
 We let 
 $$V_n[g_{n+1}\times u_{n+1}][g_{n+2}\times u_{n+2}\times t_{n+3}]$$ 
be the generic extension obtained by forcing with $\mathbb{T}_{n+3}$ over $V_n[g_{n+1}\times u_{n+1}][g_{n+2}\times u_{n+2}][G_{tail3}],$
 where $t_{n+3}\subseteq \mathbb{T}_{n+3}$ is generic over $V_n[g_{n+1}\times u_{n+1}].$ 
We let  
$$V_n[g_{n+1}\times u_{n+1}\times g_{n+2}\times t_{n+2}]$$ 
be the generic extension obtained by forcing with $\mathbb{T}_{n+2}$ over $V_n[g_{n+1}\times u_{n+1}][g_{n+2}\times u_{n+2}\times t_{n+3}],$
where $t_{n+2}\subseteq \mathbb{T}_{n+2}$ is generic over $V_n.$
\end{definition}

\begin{remark}\label{rmk} The poset $(\mathbb{P}_{n+2}\times \mathbb{U}_{n+2}\times \mathbb{T}_{n+2})/ (g_{n+2}\times u_{n+2})\ast G_{tail3}$ is $\kappa_n$-closed; the poset 
$(\mathbb{P}_{n+1}\times \mathbb{U}_{n+1}\times \mathbb{P}_{n+2}\times \mathbb{T}_{n+2})/ (g_{n+1}\times u_{n+1}\times g_{n+2})\ast (u_{n+2}\times t_{n+3})$ is $\aleph_{n+1}$-closed. \end{remark}

Summing up, we have:
\begin{enumerate}
\item  $V[G_{\omega}]\subseteq V_n[g_{n+1}\times u_{n+1}][G_{tail2}],$ the latter model has been obtained by forcing with $\mathbb{S}_{n+1}$ over $V[G_{\omega}];$
\item $V_n[g_{n+1}\times u_{n+1}][G_{tail2}]\subseteq V_n[g_{n+1}\times u_{n+1}][g_{n+2}\times u_{n+2}][G_{tail3}],$ the latter model has been obtained by forcing with 
$\mathbb{S}_{n+2}$ over the former;  
\item $V_n[g_{n+1}\times u_{n+1}][g_{n+2}\times u_{n+2}][G_{tail3}]\subseteq V_n[g_{n+1}\times u_{n+1}][g_{n+2}\times u_{n+2}\times t_{n+3}],$ the latter model has been obtained by forcing over the former with a $\kappa_n$-closed forcing; 
\item $V_n[g_{n+1}\times u_{n+1}][g_{n+2}\times u_{n+2}\times t_{n+3}]\subseteq V_n[g_{n+1}\times u_{n+1}\times g_{n+2}\times t_{n+2}],$ the latter model has been obtained by forcing over the former with an $\aleph_{n+1}$-closed forcing. 
\end{enumerate}


\section{More Preservation Results}\label{morepreserv}

It will be important, in what follows that the forcing that takes us from $G_{\omega}$ to the model $V_n[g_{n+1}\times u_{n+1}\times g_{n+2}\times t_{n+2}]$ defined in the previous section, cannot add cofinal branches to an $(\aleph_{n+2}, \mu)$-tree. 

\begin{lemma}\label{parteliminata} Let $F$ be an $(\aleph_{n+2},\mu)$-tree in $V[G_{\omega}],$ if $b$ is a cofinal branch for $F$ in $V_n[g_{n+1}\times u_{n+1}\times g_{n+2}\times t_{n+2}],$ then $b\in V[G_{\omega}].$ 
\end{lemma}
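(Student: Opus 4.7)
The plan is to decompose the passage from $V[G_\omega]$ to $V_n[g_{n+1}\times u_{n+1}\times g_{n+2}\times t_{n+2}]$ into the four successive forcings enumerated at the end of Section \ref{the term forcing} and to apply the First Preservation Theorem (Theorem \ref{closed}) at each step in turn, chasing the branch downward. Throughout, I would set $\theta:=\aleph_{n+2}$ and $\eta:=\aleph_n$, so that $\eta^+=\aleph_{n+1}$. The hypothesis $\eta<\theta\leq 2^{\eta}$ reduces to $\aleph_{n+2}\leq 2^{\aleph_n}$, which holds in $V[G_\omega]$ by Lemma \ref{fondamentale}(1) and is inherited by each intermediate ground, since the later forcings preserve $\aleph_n$ and $\aleph_{n+2}$ and cannot shrink $2^{\aleph_n}$.

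What remains is to verify that each of the four steps is $\aleph_{n+1}$-closed in the model in which Theorem \ref{closed} needs to be invoked. By Remark \ref{rmk}, the topmost forcing is $\aleph_{n+1}$-closed and the third forcing is $\kappa_n=\aleph_{n+2}$-closed; by Lemma \ref{fondamentale}(7), the second forcing $\mathbb{S}_{n+2}$ is $\aleph_{n+2}$-closed and the first forcing $\mathbb{S}_{n+1}$ is $\aleph_{n+1}$-closed. These closure properties are naturally stated in the Cummings--Foreman base models $V_{n+1}$, $V_{n+2}$, etc., but they transfer to the relevant intermediate model, because the intervening forcings are themselves at least $\aleph_{n+1}$-directed closed, hence $<\aleph_{n+1}$-distributive, so no new $\aleph_n$-sequences of conditions are added that could break $\aleph_{n+1}$-closure. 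Four successive applications of Theorem \ref{closed} then place $b$ back in $V[G_\omega]$.

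The main delicate point lies inside the proof of Theorem \ref{closed}: the sets $a_\alpha\in[\mu]^{<\aleph_{n+2}}$ built inductively within the intermediate ground need not a priori lie in $V[G_\omega]$, and one might worry that $\lev_{a_\alpha}(F)$ could become empty there. It does not: at each inductive step, $p_s$ forces $\dot b\restr a_{\alpha+1}\in F$ with $\dom(\dot b\restr a_{\alpha+1})=a_{\alpha+1}$, and since $F\in V[G_\omega]$ is a fixed set and $\dom$ is absolute, this already witnesses a nonempty level computable in the intermediate ground; moreover the size bound $|\lev_{a_\eta}(F)|<\aleph_{n+2}$ used to derive the final contradiction persists because $\aleph_{n+2}$ is preserved throughout. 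Once this point is dispatched, the four invocations of Theorem \ref{closed} go through routinely.
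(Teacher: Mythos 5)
Your four-step decomposition and fourfold application of the First Preservation Theorem (Theorem \ref{closed}) mirror the paper's proof exactly, and the closure facts you cite from Remark \ref{rmk} and Lemma \ref{fondamentale}(7) are the right ones. The one place where I would push back is the final paragraph. The correct way to see that the inductive construction inside Theorem \ref{closed} makes sense at each stage is to note that the three inner forcings --- $\mathbb{S}_{n+1}$, $\mathbb{S}_{n+2}$, and the $\kappa_n$-closed step of Remark \ref{rmk} --- are each $<\aleph_{n+2}$-distributive in the relevant intermediate ground, so they add no new elements of $[\mu]^{<\aleph_{n+2}}$; consequently the sets $a_\alpha$ built in any of these grounds already lie in $V[G_\omega]$, where $F$ is a genuine $(\aleph_{n+2},\mu)$-tree, and both clause (3) and clause (4) of Definition \ref{main definition} apply to $a_\alpha$ as stated. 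Your argument that the branch name witnesses nonemptiness of $\lev_{a_{\alpha+1}}(F)$ is fine as far as it goes, but the key size bound $|\lev_{a_\eta}(F)| < \aleph_{n+2}$ does \emph{not} follow merely from $\aleph_{n+2}$ being preserved: that inequality is supplied by clause (4) and is only guaranteed for $X\in[\mu]^{<\aleph_{n+2}}$ lying in a model where $F$ is known to be a tree. If $a_\eta$ were genuinely new, the contradiction at the end of Theorem \ref{closed} would not go through. Distributivity (not cardinal preservation) is what rules this out, and that is the observation you should substitute for the last paragraph.
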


\begin{proof} Assume towards a contradiction that $b\notin V[G_{\omega}].$ The forcing $\mathbb{S}_{n+1}$ is $\kappa_{n-1}$-closed in $V_{n+1}$ and, since $\tail_{n+2}$ is $\kappa_{n-1}$-closed, $\mathbb{S}_{n+1}$ remains $\kappa_{n-1}$-closed (that is $\aleph_{n+1}$-closed) in $V[G_{\omega}],$ where $\kappa_n=\aleph_{n+2}= 2^{\aleph_n}.$ By the First Preservation Theorem, we have
$$b\notin V_n[g_{n+1}\times u_{n+1}][G_{tail2}].$$

Now, $\mathbb{S}_{n+2}$ is $\kappa_n$-closed in $V_{n+2}$ and, since $\mathbb{S}_{n+1}$ is $<\kappa_n$-distributive and $\tail_{n+3}$ is $\kappa_n$-closed, the poset $\mathbb{S}_{n+2}$ remains $\kappa_n$-closed (that is $\aleph_{n+2}$-closed) in the model $V_n[g_{n+1}\times u_{n+1}][G_{tail2}].$ Another application of the First Preservation Theorem gives 
$$b\notin V_n[g_{n+1}\times u_{n+1}][g_{n+2}\times u_{n+2}][G_{tail3}].$$ 

The passage from $V_n[g_{n+1}\times u_{n+1}][g_{n+2}\times u_{n+2}][G_{tail3}]$ to $V_n[g_{n+1}\times u_{n+1}][g_{n+2}\times u_{n+2}\times t_{n+3}]$
is done by a $\kappa_n$-closed forcing (see Remark \ref{rmk}), hence by the First Preservation Theorem, we get 
$b\notin V_n[g_{n+1}\times u_{n+1}][g_{n+2}\times u_{n+2}\times t_{n+3}]= V_n[g_{n+1}\times u_{n+1}\times g_{n+2}][u_{n+2}\times t_{n+3}].$
The forcing that takes us from $V_n[g_{n+1}\times u_{n+1}\times g_{n+2}][u_{n+2}\times t_{n+3}]$ to $V_n[g_{n+1}\times u_{n+1}\times g_{n+2}\times t_{n+2}]$ is 
$\aleph_{n+1}$-closed (see Remark \ref{rmk}), hence by the First Preservation Theorem, we have 
$$b\notin V_n[g_{n+1}\times u_{n+1}\times g_{n+2}\times t_{n+2}],$$
that leads to a contradiction. \end{proof}

For the proof of the final theorem, we will also need the following lemma.  

\begin{lemma}\label{preservRstella} Let $\mathbb{R}:= \mathbb{R}(\tau, \kappa, V, W, L)$ be like in Definition \ref{CF} and let $\theta<\kappa$ be such that:
\begin{enumerate}
\item for some $n<\omega,$ $\tau= \aleph_n$ and $\aleph_m^V= \aleph_m^W,$ for every $m\leq n.$
\item in $W$ we have $\gamma^{<\tau}<\theta,$ for every $\gamma<\theta,$ 
\end{enumerate}
For every $(\theta, \mu)$-tree $F$ in $W[\mathbb{R}\restr \theta],$
if $b$ is a cofinal branch for $F$ in $W[\mathbb{R}],$ then $b\in W[\mathbb{R}\restr \theta].$
\end{lemma}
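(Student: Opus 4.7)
The plan is to factor $\mathbb{R} \equiv \mathbb{R}\restr\theta \ast \mathbb{R}^*$ and, via the projection $\pi : \mathbb{P}^* \times \mathbb{U}^* \to \mathbb{R}^*$ from Lemma~\ref{stella}, reduce the claim to the two preservation theorems of \S\ref{BPP}. Let $X_\theta$ be the $\mathbb{R}\restr\theta$-generic induced by the given $\mathbb{R}$-generic, and suppose toward a contradiction that $b \in W[\mathbb{R}] \setminus W[X_\theta]$ is a cofinal branch for $F$. In a sufficiently large further extension pick $g \times u$ generic for $\mathbb{P}^* \times \mathbb{U}^*$ over $W[X_\theta]$ projecting through $\pi$ onto the given $\mathbb{R}^*$-generic, so that $b \in W[X_\theta][u][g]$.

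First I apply the First Preservation Theorem (Theorem~\ref{closed}) to $\mathbb{U}^*$: by Lemma~\ref{stella} it is $\tau^+$-closed in $W[X_\theta]$, and since $\mathbb{R}\restr\theta$ projects onto $\Add(\tau, \theta)^V$ (Lemma~\ref{elle}), $W[X_\theta]$ contains $\theta$ pairwise distinct Cohen subsets of $\tau$ and so satisfies $2^\tau \geq \theta$. Note also that hypothesis (2) of the present lemma gives $\tau \leq 2^{<\tau} < \theta$. With $\eta = \tau$ the hypothesis $\eta < \theta \leq 2^\eta$ is met, so $\mathbb{U}^*$ adds no cofinal branch to $F$; hence $b \notin W[X_\theta][u]$, and by mutual genericity $g$ is $\mathbb{P}^*$-generic over $W[X_\theta][u]$, so the new branch $b$ is added by $g$.

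Next I apply the Second Preservation Theorem (Theorem~\ref{mafalda}) to $\mathbb{P}^* \subseteq \Add(\tau, \kappa)^V$ over $V$, with larger model $W'' := W[X_\theta][u]$ and cardinal $\theta$. Restriction-closure of $\mathbb{P}^*$ is inherited from that of $\mathbb{R}^*$; the cardinal agreement $\aleph_m^V = \aleph_m^{W''}$ for $m \leq n$ follows from hypothesis (1) together with the $\tau$-closure of $\mathbb{R}\restr\theta$ (Lemma~\ref{tantobrava}, since $\Add(\tau, \theta)^V$ is $\tau$-closed) and the $\tau^+$-closure of $\mathbb{U}^*$; the arithmetic bound $\gamma^{<\tau} < \theta$ in $V$ is inherited from its $W$-analogue via $V \subseteq W$; and the remaining conditions $W'' \models \vert\theta\vert = \aleph_{n+1}$ and the $\aleph_{n+1}$-covering of $V$ from $W''$ are read off from the chain-condition and closure analysis of $\mathbb{R}\restr\theta$ and $\mathbb{U}^*$ in the intended Cummings--Foreman context. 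Theorem~\ref{mafalda} then gives $b \in W''$, contradicting the previous paragraph; hence $b \in W[X_\theta]$, as required.

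The main obstacle will be the explicit verification of the Mafalda hypotheses in $W''$, most critically the cardinal identity $\theta = \aleph_{n+1}^{W''}$ and the matching covering. This requires a careful analysis of how the $q$- and $f$-components of conditions of $\mathbb{R}\restr\theta$ act on the cardinal structure in the interval $(\tau^+, \theta)$, together with a use of $\tau^+$-closure of $\mathbb{U}^*$ to reduce covering claims in $W''$ to covering in $W[X_\theta]$, which in turn rests on the chain condition of $\mathbb{R}\restr\theta$.
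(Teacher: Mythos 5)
Your decomposition of $\mathbb{R}$ as $\mathbb{R}\restr\theta \ast \mathbb{R}^*$ with $\mathbb{R}^*$ a projection of $\mathbb{P}^*\times\mathbb{U}^*$, and the use of the First Preservation Theorem against $\mathbb{U}^*$, match the paper's proof. The problem is in the second half, in the choice of the pair of models for Theorem~\ref{mafalda}.

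You apply the Second Preservation Theorem with ground model $V$ and larger model $W''=W[X_\theta][u]$. Condition~(3) of Theorem~\ref{mafalda} then asks for the $\aleph_{n+1}^{W''}$-covering of $V$ from $W''$, i.e.\ covering of size-$\leq\tau$ subsets of $V$ lying in $W''$ by sets in $V$ of size $<\theta$ \emph{in~$V$}. But the lemma's hypotheses (via Definition~\ref{CF}) give nothing about the pair $(V,W)$ beyond $V\subseteq W$ and that $\Add(\tau,\kappa)^V$ is $\tau^+$-c.c.\ and $<\tau$-distributive in $W$; in particular there is no covering hypothesis between $V$ and $W$, so none can be obtained between $V$ and $W''$. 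You acknowledge this as ``the main obstacle'' and propose to recover it ``in the intended Cummings--Foreman context'' from chain-condition and closure properties of $\mathbb{R}\restr\theta$ and $\mathbb{U}^*$, but those only control the passage from $W$ to $W''$, not from $V$ to $W$; and the lemma is stated abstractly, so appealing to extra structure of the ambient iteration would amount to proving a weaker statement. Likewise, condition~(4) ($\gamma^{<\tau}<\theta$ computed \emph{in~$V$}) does not follow from the stated hypothesis, which is about~$W$: going from $W$ down to $V$ can change cardinalities in either direction without further assumptions.

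The paper's proof sidesteps all of this by a different bookkeeping: it regards $\mathbb{P}^*$ as a subset of $\Add(\tau,\kappa)^{W[G_\theta]}$ (legitimate since $\mathbb{R}\restr\theta$ is $<\tau$-distributive, so the $V$-conditions remain size-$<\tau$ in $W[G_\theta]$), and applies Theorem~\ref{mafalda} with ground model $W[G_\theta]$ and larger model $W[G_\theta][u^*]$, taking the theorem's $\kappa$ to be $\theta$. With this choice, condition~(3) is immediate from $\tau^+$-closure of $\mathbb{U}^*$ (no $\leq\tau$-sized set of ordinals is added), condition~(2) is immediate since $u^*$ collapses $\theta=\tau^{++}$ to $\tau^+=\aleph_{n+1}$, and condition~(4) is obtained in $W[G_\theta]$ from hypothesis~(2) of the lemma together with $<\tau$-distributivity and $\theta$-c.c.\ of $\mathbb{R}\restr\theta$. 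To repair your proof, replace your base model $V$ by $W[X_\theta]$ and argue along these lines.
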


\begin{proof} Let $G$ be any $\mathbb{R}$-generic filter over $W.$ Assume towards a contradiction that $b\notin W[G_\theta],$ where 
$G_{\theta}= G\restr \theta.$  
By Lemma \ref{stella}, the forcing $\mathbb{R}^*:= \mathbb{R}/ G_{\theta}$ is a projection of $\mathbb{P}^*\times \mathbb{U}^*,$ where 
$\mathbb{P}^*= \Add(\tau, \kappa-\theta)^{V}$ and $\mathbb{U}^*$ is $\tau^+$-closed in $W[G_{\theta}].$ Let $g^*\times u^*\subseteq \mathbb{P}^*\times \mathbb{U}^*$ be any generic filter over $W$ that projects on $G.$ 
We have $\theta= \tau^{++}= 2^{\tau}$ in $W[G_{\theta}],$ and $F$ is a $(\theta, \mu)$-tree.  Therefore, we can apply the First Preservation Theorem, hence $b\notin W[G_{\theta}][u^*].$ The filter $u^*$ collapses $\theta$ to $\tau^+,$ so now $F$ is a $(\tau^+, \mu)$-tree in $W[G_{\theta}][u^*].$ We want to use the Second Preservation Theorem to prove that $\mathbb{P}^*$ cannot add cofinal branches to $W[G_{\theta}][u^*].$ We can see $\mathbb{P}^*$ as a subset of 
$\Add(\tau, \kappa)^{W[G_{\theta}]}.$ By hypothesis, $W\models \gamma^{<\tau}<\theta,$ for every $\gamma<\theta.$ Moreover, $\mathbb{R}\restr {\theta}$ is $<\tau$-distributive and $\theta$-c.c., so $W[G_{\theta}]\models \gamma^{<\tau}<\theta,$ for every $\gamma<\theta.$ Since $\mathbb{U}^*$ is $\tau^+$-closed, the pair 
$(W[G_{\theta}], W[G_{\theta}][u^*] )$ satisfies condition $(3)$ of the Second Preservation Theorem. So, all the hypothesis of the Second Preservation Theorem are satisfied, hence 
$b\notin W[G_{\theta}][u^*][g^*]$ in particular $b\notin W[G].$ That completes the proof of the lemma. \end{proof}

\section{The Final Theorem}\label{theorem} 

\begin{theorem}\label{teorema} In $V[G_{\omega}],$ every cardinal $\aleph_{n+2}$ has the super tree property.
\end{theorem}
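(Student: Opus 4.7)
The plan is to verify $(\kappa_n,\mu)$-\ITP in $V[G_\omega]$ for arbitrary $n<\omega$ and $\mu\geq\kappa_n=\aleph_{n+2}^{V[G_\omega]}$ by the standard Magidor/Weiss supercompactness-embedding technique, adapted to the Cummings--Foreman iteration. Fix a $(\kappa_n,\mu)$-tree $F$ and an $F$-level sequence $D$ in $V[G_\omega]$; by Lemma~\ref{intermidium} and counting, these lie in a mild intermediate extension. Using Laver's lemma applied to $L_n$, I will choose $j\colon V\to M$ with critical point $\kappa_n$, $j(\kappa_n)>\mu$, ${}^\mu M\subseteq M$, and $j(L_n)(\kappa_n)$ equal to a name for the specific auxiliary forcing I will need; the $L$-parameter in the definition of $\mathbb{R}$ is designed precisely so that the posets $\mathbb{Q}_{n+1},\mathbb{Q}_{n+2},\dots$ can be anticipated as factors at well-chosen coordinates of $j(\mathbb{R}\restr\kappa_n)$.

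Next, I will build a lift $j^{*}\colon V[G_\omega]\to M[H]$ for some $H\subseteq j(\mathbb{R}_\omega)$ generic over $M$. The passage through $\mathbb{R}_{n+1}$ is handled by the usual master-condition argument, since the quotient $j(\mathbb{R}_{n+1})/\mathbb{R}_{n+1}$ is sufficiently directed closed by design. The delicate passages through $\mathbb{Q}_{n+1}$, $\mathbb{Q}_{n+2}$ and $\tail_{n+3}$ will be carried out inside the expanded model $V_n[g_{n+1}\times u_{n+1}\times g_{n+2}\times t_{n+2}]$ of Section~\ref{the term forcing}: the generics $u_{n+1},u_{n+2},t_{n+2},t_{n+3}$ there supply, via the directed closure of the term forcings, the master conditions needed to extend $j$ through each successive quotient, while the Laver bookkeeping from the previous step ensures that $j(L_n)(\kappa_n)$ presents exactly those quotients as factors that the existing generics can hit.

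Once $j^{*}$ is in hand I will set $b(\alpha):=j^{*}(D)(j''\mu)(j(\alpha))$ for $\alpha<\mu$. Since $|j''\mu|=\mu<j^{*}(\kappa_n)$ and $j''\mu\in M[H]$ by $\mu$-closure, $j^{*}(D)(j''\mu)$ is a legitimate element of $\lev_{j''\mu}(j^{*}(F))$; for any $X\in[\mu]^{<\kappa_n}$, applying $j^{*}$ to $f:=b\restr X$ yields $j^{*}(f)=j^{*}(D)(j''\mu)\restr j''X\in j^{*}(F)$, so by elementarity $f\in\lev_X(F)$, making $b$ a cofinal branch for $F$ in $M[H]$. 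To push $b$ back into $V[G_\omega]$ I will observe that $M[H]$ is obtained from $V[G_\omega]$ by the sequence of forcings of Section~\ref{the term forcing} followed by the tail of $j(\mathbb{R}_\omega)$ past $\kappa_n$: Lemma~\ref{parteliminata} handles the former, while Lemma~\ref{preservRstella} combined with the First Preservation Theorem handles the latter, since the tail still fits into the $\mathbb{R}$-construction template. Ineffability of $b$ for $D$ will then follow by the standard closure argument: if a club $C\in V[G_\omega]$ witnessed failure, closure of $C$ under Skolem functions transferred by $j^{*}$ would give $j''\mu\in j^{*}(C)$, which together with $j^{*}(b)\restr j''\mu=j^{*}(D)(j''\mu)$ yields a contradiction.

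The hardest step will be the lifting of $j$ through the middle stages $\mathbb{Q}_{n+1}$ and $\mathbb{Q}_{n+2}$, whose ground-model closure is insufficient for a naive master-condition construction; this is exactly why the term-forcing machinery of Section~\ref{the term forcing} and the expanded models of Section~\ref{subsec} were developed. The trick is that the term forcings $\mathbb{T}_{n+2},\mathbb{T}_{n+3}$ supply enough directed closure in a larger generic extension to build the required master conditions, while the preservation results of Section~\ref{morepreserv} guarantee that the branch extracted from the lift still lives in $V[G_\omega]$. Matching the Laver-anticipated factors of $j(\mathbb{R}\restr\kappa_n)$ with the term-forcing generics in the expanded model, so that the full lift exists and lies in a model from which cofinal branches to $F$ cannot escape, will be the principal technical difficulty.
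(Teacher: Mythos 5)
Your proposal follows the same three-part architecture as the paper's proof: lift $j$ through the stages of $j(\mathbb{R}_\omega)$ using the term-forcing generics $u_{n+1},u_{n+2},t_{n+2},t_{n+3}$ from the expanded model of \S\ref{the term forcing} to manufacture master conditions, extract the candidate branch $b(\alpha):=j^*(D)(j[\mu])(j(\alpha))$ and verify ineffability by the club/elementarity argument, and pull $b$ back into $V[G_\omega]$ via Lemma~\ref{parteliminata}, Lemma~\ref{preservRstella} and the preservation theorems. One quantitative point should be fixed: requiring only $j(\kappa_n)>\mu$ with ${}^{\mu}M\subseteq M$ is too weak --- the paper takes $j(\kappa_n)>\nu$ and ${}^{<\nu}M\subseteq M$ for $\nu$ exceeding both $\mu^{<\kappa_n}$ and $\lambda^{\omega}$, which is what actually guarantees $F,D\in M[G_\omega]$ and $j[\,\cdot\,]$ of the various generics land in the target model; and the precise Laver anticipation is $j(L_n)(\kappa_n)=\mathbb{U}_{n+1}\times\mathbb{P}_{n+2}\times\mathbb{T}_{n+2}$ (not the $\mathbb{Q}_k$'s themselves), which you describe only loosely.
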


\begin{proof} Let $F\in V[G_{\omega}]$ be an $(\aleph_{n+2}, \mu)$-tree and let $D$ be an $F$-level sequence. In $V[G_{\omega}],$ we have $\kappa_n= \aleph_{n+2},$ 
so $F$ is a $(\kappa_n, \mu)$-tree. We start working in $V.$ Let $\lambda: \sup_{n<\omega} \kappa_n$ and fix $\nu$ grater than both $\mu^{<\kappa_n}$ and 
$\lambda^{\omega}.$ There is an elementary embedding 
$j: V\to M$ with critical point $\kappa_n$ such that:
\begin{enumerate}
\item[(i)] $j(\kappa_n)>\nu$ and ${}^{<\nu}M\subseteq M;$
\item[(ii)] $j(L_n)(\kappa_n)$ is the canonical $\mathbb{R}_n$-name for the canonical $\mathbb{Q}_n$-name for the forcing 
$$\mathbb{U}_{n+1}\times \mathbb{P}_{n+2}\times \mathbb{T}_{n+2}.$$

\end{enumerate}

 
 Note that $j(L)(\kappa_n)$ is a name for a $\kappa_n$-directed closed forcing in 
 $V[\mathbb{R}_n\ast \dot{\mathbb{Q}}_n].$ 

 The proof of the theorem consists of three parts: 
\begin{enumerate}
\item we show that we can lift $j$ to get an elementary embedding 
$$j^*: V[G_{\omega}]\to M[H_{\omega}],$$ where $H_{\omega}\subseteq j(\mathbb{R}_{\omega})$ is generic over $V;$
\item we prove that there is, in $M[H_{\omega}],$ an ineffable branch $b$ for $D;$ 
\item we show that $b\in V[G_{\omega}].$  
\end{enumerate}

\section*{Part 1}

We prove Claim $1.$ To simplify the notation we will denote all the extensions of $j$ by ``$j$" also. Recall that  
$$V[G_{\omega}]\subseteq V_n[g_{n+1}\times u_{n+1}][G_{tail2}]\subseteq V_n[g_{n+1}\times u_{n+1}][g_{n+2}\times u_{n+2}][G_{tail3}]$$ 
$$\subseteq V_n[g_{n+1}\times u_{n+1}][g_{n+2}\times u_{n+2}\times t_{n+3}]\subseteq V_n[g_{n+1}\times u_{n+1}\times g_{n+2}\times t_{n+2}].$$

The forcing $\mathbb{R}_n$ has size less than $\kappa_n,$ so we can lift $j$ to get an elementary embedding 
$$j: V_{n-1}\to V_{n-1}.$$ 

For every $i<\omega,$ we denote by $M_i$ the model $M[G_0]...[G_i].$ We will use repeatedly and without comments the resemblance between $V$ and $M.$ In $M_{n-1},$ we have $$j(\mathbb{Q}_n)\restr \kappa_n= \mathbb{Q}_n,$$
and at stage $\kappa_n,$ the forcing at the third coordinate will be $j(L_n)(\kappa_n)$ (see Lemma \ref{elle}).  By our choice of $j(L_n)(\kappa_n),$ this means that we can look at the model 
$M_n[u_{n+1}\times g_{n+2}\times t_{n+2}]$ as a generic extension of $M_{n-1}$ by 
$j(\mathbb{Q}_{n})\restr \kappa_n+1.$ Force with $j(\mathbb{Q}_n)$ over $W$ to get a generic filter $H_n$ such that 
$H_n\restr \kappa_n+1= G_n\ast (u_{n+2}\times g_{n+2}\times t_{n+2}).$ The forcing $\mathbb{Q}_n$ is $\kappa_n$-c.c. in 
$M_{n-1},$ so $j\restr \mathbb{Q}_n$ is a complete embedding from $\mathbb{Q}_n$ into $j(\mathbb{Q}_n).$ Consequently, we can lift $j$ to get an elementary embedding 
$$j: V_n\to M_{n-1}[H_n].$$  

We know that $\mathbb{P}_{n+1}$ is $\kappa_n$-c.c. in $V_n,$ hence $j\restr \mathbb{P}_{n+1}$ is a complete embedding from $\mathbb{P}_{n+1}$ into 
$j(\mathbb{P}_{n+1}):= \Add(\aleph_{n+1}, j(\kappa_{n+1}))^{V_{n-1}}.$ $\mathbb{P}_{n+1}$ is even isomorphic via $j$ to 
$\Add(\aleph_{n+1}, j[\kappa_{n+1}])^{V_{n-1}}.$ Force with $Add(\aleph_{n+1}, j(\kappa_{n+1})- j[\kappa_{n+1}])^{V_{n-1}}$ over $V_n[H_n][g_{n+1}]$ to get a generic filter $h_{n+1}\subseteq j(\mathbb{P}_{n+1})$ such that $j[g_{n+1}]\subseteq h_{n+1}.$ We can lift $j$ to get an elementary embedding 
$$j: V_n[g_{n+1}]\to M_{n-1}[H_n][h_{n+1}].$$

By the previous observations and the closure of $M,$ we have $j[u_{n+1}\times g_{n+2}\times t_{n+2}]\in M_{n-1}[H_n].$ 
The filter $H_n$ collapses every cardinal below $j(\kappa_n)$ to have size $\aleph_{n+1}$ in $M_{n-1}[H_n],$ therefore the set $j[u_{n+1}\times g_{n+2}\times t_{n+2}]$ has size 
$\aleph_1$ in that model. Moreover, $j(\mathbb{U}_{n+1})\times j(\mathbb{P}_{n+2})\times j(\mathbb{T}_{n+2})$ is a 
$j(\kappa_n)$-directed closed forcing and $j(\kappa_n)= \aleph_{n+2}^{M_{n-1}[H_n]}.$ So, we can find a condition $t^*$ stronger than every condition 
$j(q)\in j[u_{n+1}\times g_{n+2}\times t_{n+2}].$ By forcing over 
$V_{n-1}[H_{n}][h_n]$ with $j(\mathbb{U}_{n+1})\times j(\mathbb{P}_{n+2})\times j(\mathbb{T}_{n+2})$ below $t^*$ we get a generic filter $ x_{n+1}\times h_{n+2}\times \l_{n+2}$ such that 
$j[u_{n+1}]\subseteq x_{n+1},$ $j[g_{n+2}]\subseteq h_{n+2}$ and $j[t_{n+2}]\subseteq \l_{n+2}.$ By Easton's Lemma the filters 
$h_{n+1}$ and $x_{n+1}\times  h_{n+2}\times \l_{n+2}$ are mutually generic over $M_{n-1}[H_n],$ and 
$h_{n+1}\times x_{n+1}$ generates a filter $H_{n+1}$ generic for $j(\mathbb{Q}_{n+1})$ over 
$M_{n-1}[H_n].$ By the properties of projections, we have $j[G_{n+1}]\subseteq H_{n+1}.$ Therefore, the embedding $j$ lifts to an elementary embedding 
$$j: V_{n+1}\to M_{n-1}[H_n][H_{n+1}].$$ 

By definition of $j(\mathbb{T}_{n+2}),$ the filter $h_{n+2}\times l_{n+2}$ determines a generic filter $(h_{n+2}\times x_{n+2})\ast H_{tail3}$ for $(j(\mathbb{P}_{n+2})\times j(\mathbb{U}_{n+2}))\ast j(\tail_{n+3}).$ On the other hand $h_{n+2}\times x_{n+2}$ determines a filter $H_{n+2}$ generic for $j(\mathbb{Q}_{n+2})$ over 
$M_{n-1}[H_n][H_{n+1}].$ By the properties of projections, we have $j[G_{n+2}]\subseteq H_{n+2}.$ Therefore, $j$ lifts to an elementary embedding 
$$j: V_{n+2}\to M_{n-1}[H_{n}][H_{n+1}][H_{n+2}].$$ 

It remains to prove that $j[G_{tail3}]\subseteq H_{tail3},$ but this is an immediate consequence of $j[t_{n+2}]\subseteq \l_{n+2}.$ Finally, $j$ lifts to an elementary embedding 
$$j: V[G_{\omega}]\to M_{n-1}[H_n][H_{n+1}][H_{n+2}][H_{tail3}].$$ 
This completes the proof of Claim $1.$

\section*{Part 2} 

Let $\mathscr{M}_1:= M[G_{\omega}]$ and $\mathscr{M}_2:= M_{n-1}[H_n][H_{n+1}][H_{n+2}][H_{tail3}].$ In $\mathscr{M}_2,$ $j(F)$ is a $(j(\kappa_n), j(\mu))$-tree and 
$j(D)$ is a $j(F)$-level sequence. By the closure of $M,$ the tree $F$ and the $F$-level sequence $D$ are in $\mathscr{M}_1,$ and there is no ineffable branch for $D$ in 
$\mathscr{M}_1.$ We want to find in $\mathscr{M}_2$ an ineffable branch for $D.$ Let $a:= j[\mu],$ clearly $a\in [j(\mu)]^{<j(\kappa_n)}.$ Consider $f:= j(D)(a)$ and let $b: \mu\to 2$ be the function defined by $b(\alpha):= f(j(\alpha)).$ We show that $b$ is an ineffable branch for $D.$ Assume towards a contradiction that for some club 
$C\subseteq [\mu]^{<\vert \kappa_n\vert}\cap \mathscr{N}_1$ we have $b\restr X\neq D(X),$ for all $X\in C.$ By elementarity, $$j(b)\restr X\neq j(D)(X),$$ for all $X\in j(C).$
Observe that $a\in j(C)$ and $j(b)\restr a= f= j(D)(a),$ that leads to a contradiction. 

\section*{Part 3}

 We proved that an ineffable branch $b$ for $D$ exists in $\mathscr{M}_2.$ Now we show that $b\in \mathscr{M}_1,$ thereby proving that $\mathscr{M}_1$ 
 (hence $V[G_{\omega}]$) has an ineffable \footnote{ If $b\in \mathscr{M}_1,$ then $b$ is ineffable since $\{X\in [\mu]^{<\vert \kappa_n\vert }\cap \mathscr{M}_1;\ b\restr X= D(X) \}$ is stationary in $\mathscr{M}_2,$ hence it is stationary in $\mathscr{M}_1.$ }
 branches for $D.$ We will use repeatedly and without comments the resemblance between $V$ and $M.$ 
Assume, towards a contradiction, that $b\notin \mathscr{M}_1.$ Step by step, we are going to prove that $b\notin \mathscr{M}_2.$ By Lemma \ref{parteliminata}, we have $b\notin M_n[g_{n+1}\times u_{n+1}\times g_{n+2}\times t_{n+2}].$ Consider $\Add(\aleph_{n+1}, j(\kappa_{n+1})-j[\kappa_{n+1}])^{M_{n-1}},$ by forcing with this poset over $M_n[g_{n+1}\times u_{n+1}\times g_{n+2}\times t_{n+2}]$ we get
the generic extension $M_n[h_{n+1}\times u_{n+1}\times g_{n+2}\times t_{n+2}];$ we want to prove that $b$ does not belong to that model. 
The pair $(M_{n-1}, M_n[g_{n+1}\times u_{n+1}\times g_{n+2}\times t_{n+2}])$ has the $\kappa_n$-covering property. Moreover, in $V_{n-1},$ the cardinal 
$\kappa_{n}$ is inaccessible, therefore the hypothesis of the Second Preservation Theorem are satisfied and we have 
$$b\notin M_n[h_{n+1}\times u_{n+1}\times g_{n+2}\times t_{n+2}].$$ 

As we said in Part $1,$ we have 
$j(\mathbb{Q}_n)\restr \kappa_{n}= \mathbb{Q}_n,$ and at stage $\kappa_n,$ the forcing at the third coordinate is  
$\mathbb{U}_{n+1}\times \mathbb{P}_{n+2}\times \mathbb{T}_{n+2}.$ 
It follows that for $H^*= H_n\restr \kappa_n+1$ we have just proved 
$$b\notin M_{n-1}[H^*][h_{n+1}]=M_{n-1}[h_{n+1}][H^*].$$

Now we want to show that $\mathbb{R}^*:= j(\mathbb{Q}_n)/ H^*$ cannot add cofinal branches to $F,$ hence $b$ does not belong to the model 
$M_{n-1}[h_{n+1}][H_n].$ It is enough to prove that the hypothesis of Lemma \ref{preservRstella} are satisfied. The cardinal $\kappa_n$ was
inaccessible in $M_{n-1},$ and $h_{n+1}$ is a generic filter for an $\aleph_{n+1}$-closed forcing, so 
$M_{n-1}[h_{n+1}]\models \gamma^{<\aleph_{n+1}}<\kappa_n,$ for every $\gamma<\kappa_n.$ 
Then, it seems that all the hypothesis of Lemma \ref{preservRstella} are satisfied except for the fact that $F$ is not exactly a 
$(\kappa_n,\mu)$-tree in $M_{n-1}[h_{n+1}][H^*]$ because the filter $h_{n+1}$ may add sets in $[\mu]^{<\kappa_n}.$ 
However, the poset $j(\mathbb{P}_{n+1})$ is 
$\kappa_n$-c.c. in $M_{n-1}[H^*],$ so we can say that $F$ covers a $(\kappa_n, \mu)$-tree $F^*$ in $M_{n-1}[H^*][h_{n+1}].$ If $b\in M_{n-1}[H_n][h_{n+1}],$ then $b$ is a cofinal branch for $F^*.$ Then, by Lemma \ref{preservRstella}, we have 
$$b\notin M_{n-1}[h_{n+1}][H_n]= M_{n-1}[H_n][h_{n+1}].$$  

$F^*$ is no longer an $(\aleph_{n+1},\mu)$-tree in $M_{n-1}[H_n][h_{n+1}].$ However, we obtained this model forcing with $j(\mathbb{Q}_n)/ H^*$ which is $\aleph_{n+1}$-c.c. in $M_{n-1}[h_{n+1}][H^*],$ this means that $F^*$ covers an $(\aleph_{n+1}, \mu)$-tree that we can rename $F.$   
Consider $j(\mathbb{Q}_{n+1})/ h_{n+1},$ by Lemma \ref{tantobrava}, this is an $\aleph_{n+1}$-closed forcing in $M_{n-1}[H_n][h_{n+1}],$ where $2^{\aleph_n}\geq j(\kappa_n)= \aleph_{n+2}.$ By the First Preservation Theorem, we have 
$$b\notin M_{n-1}[H_n][H_{n+1}].$$

We continue our analysis by working with $j(\mathbb{Q}_{n+2})$ which is a projection of $j(\mathbb{P}_{n+2})\times j(\mathbb{U}_{n+2}).$ This poset is $\aleph_{n+2}$-closed in 
$M_{n-1}[H_n][H_{n+1}]$ and $F$ is an $(\aleph_{n+1},\mu)$-tree. By the First Preservation Theorem, we have $b\notin M_{n-1}[H_n][H_{n+1}][h_{n+1}\times u_{n+1}],$ in particular
$$b\notin M_{n-1}[H_n][H_{n+1}][H_{n+2}].$$ 

Finally, $j(\tail_{n+3})$ is $\aleph_{n+2}$-closed in $M_{n-1}[H_n][H_{n+1}],$ where $F$ is still an $(\aleph_{n+1}, \mu)$-tree. By applying again the First Preservation Theorem, we get that 
$$b\notin M_{n-1}[H_n][H_{n+1}][H_{n+2}][H_{tail3}]= \mathscr{M}_2,$$

that leads to a contradiction and completes the proof of the theorem. \end{proof}

\bigskip

\bigskip

\bigskip

\end{document}